%
%
\documentclass[11pt]{amsart}
\usepackage{amssymb,amsmath}
\usepackage{hyperref}
\usepackage{bm}
\usepackage[]{graphicx}
\usepackage{amssymb, epsfig}
%
%
%
%
%

%
\numberwithin{equation}{section}

\pagestyle{plain}
\setlength{\textheight}{21.0truecm}
\setlength{\textwidth}{16.5truecm}
\setlength{\oddsidemargin}{0.00truecm}
\setlength{\evensidemargin}{0.00truecm}
%
%

%
%

%

\numberwithin{figure}{section}
%
%

%
%

\newcommand{\eps}{\varepsilon}

%
%

%
%



\newtheorem{theorem}{Theorem}[section]

\newtheorem{prop}[theorem]{Proposition}
\newtheorem{remark}[theorem]{Remark}
%
%
%
%
%

%


%
\begin{document}
%
%
%
%
\title[Dirichlet to Neumann  for  1-d Cubic  NLS]{On the Dirichlet to Neumann Problem for
the 1-dimensional Cubic  NLS equation on the Half-Line$^{\dag}$}
\author[D.~C. Antonopoulou]{D.~C. Antonopoulou$^{\$}$}
\thanks{$^{\dag}$ Research funded by ARISTEIA II grant no. 3964 from
the General Secretariat of Research and Technology, Greece}

\author[S. Kamvissis]{S. Kamvissis$^{*}$}
\thanks{$^*$ Department of Pure and Applied Mathematics,
University of Crete, GR--700 13 Heraklion, Greece, and, Institute
of Applied and Computational Mathematics, FORTH, GR--711 10
Heraklion, Greece, email: spyros@tem.uoc.gr}
\thanks{$^\$$ Department of Mathematics, University
of Chester, Thornton Science Park, CH2 4NU, UK, and, Institute of
Applied and Computational Mathematics, FORTH, GR--711 10
Heraklion, Greece, email: d.antonopoulou@chester.ac.uk}

\subjclass{}
%
%
\begin{abstract}
Initial-boundary value problems for 1-dimensional `completely
integrable' equations can be solved via an extension of the
inverse scattering method,  which is due to Fokas and his
collaborators. A crucial feature of this method is that it
requires the values of more  boundary data than given
for a well-posed problem. In the
case of cubic NLS, knowledge of the Dirichet data suffices to
make the problem well-posed but the Fokas method also requires
knowledge of the values of Neumann data. The study of the Dirichlet to
Neumann map is thus necessary before the application of the `Fokas
transform'. In this paper, we provide a rigorous study of this map
for a large class of decaying Dirichlet data. We show that the
Neumann data are also sufficiently decaying and that, hence, the Fokas
method can be applied.
%
\end{abstract}
%
%
\maketitle
\pagestyle{myheadings}
\thispagestyle{plain}
%
%
%
\section{Introduction}

In the last twenty  years there has been a series of results by
Fokas and collaborators on initial-boundary  value problems for
`completely integrable' equations. Their method
(introduced in  \cite{F} and further developed in \cite{F2},
\cite{F3}, \cite{FI}, \cite{FIS}, see also the book by Fokas
\cite{Fbook} for a comprehensive review)
generalises the `classical' theory of Kruskal et al., cf. for
example in \cite{Krusk}, which essentially reduces initial value
problems to Riemann-Hilbert problems via the scattering transform
(and then studies these problems by the method of inverse
scattering going back to Gelfand, Levitan and Marchenko). Instead
of the usual scattering transform the new transform of Fokas et
al. is based on the
simultaneous spectral analysis of both the x-problem and the
t-problem in the Lax pair.  So, initial-boundary  value problems
are also reduced  to Riemann-Hilbert factorisation
problems in the complex plane. The Fokas transform theory was
rigorously implemented to the NLS equation on the half-line with
Schwartz initial and boundary conditions in \cite{FIS}.

Alternatively, initial-boundary  value problems for PDEs can be studied via
PDE techniques, whose validity extends to non-integrable equations.
In this direction, we cite the seminal contributions of Kenig, Ponce, Vega
\cite{KPV}, \cite{KPV2} and Bourgain \cite{B} for initial value problems and
a (limited and perhaps random) selection of works by Carrol and Bu
\cite{CarBu},
Bona, Sun and Zhang \cite{BSZ}, Colliander  and Kenig \cite{ColK}, and Holmer
\cite{Hol} for initial-boundary  value problems.

One of the main advantages of the Riemann-Hilbert formulation  is
that one can use the  powerful nonlinear stationary phase and
steepest descent theory (see e.g. the book of P. Deift (\cite{D}) and \cite{KMM})
which gives rigorous results on the
asymptotic behavior of solutions to these  Riemann-Hilbert
problems (as some parameter goes to infinity) and hence it  extracts
asymptotics for the solution of the associated soliton equation
by use of the `Riemann-Hilbert method'. (See e.g. \cite{DZ} for the seminal paper on
long time asymptotics for mKdV; \cite{Kam}, \cite{KT}, \cite{KT2}
for the Toda lattice; \cite{DVZ} for KdV; \cite {DIZ} for NLS;
see also  \cite{K} and \cite{FK} for instances of
work that makes use of the Fokas theory.)

An important fact about the Fokas method
for well-posed initial-boundary value problems, is that
it involves unknown boundary values that should be characterised in terms of the given data.
The problem of extra boundary values has to be overcome one
way or another for the completely rigorous implementation of the
Fokas transform method. As far as finite times are concerned, this issue
is dealt with by the analysis of a 'global relation' in \cite{FIS}. If we are interested in infinite times
however, this has proved to be  harder. This is the problem we are solving  here,
at least in the defocusing  case.

In this paper, we consider the non-linear  Schr\"odinger equation
(NLS) with cubic non-linearity,  posed on the real
positive semi-axis $\mathbb{R}^+$
\begin{equation}\label{nls}
{\rm i}q_t+q_{xx}-2\lambda|q|^2q=0,\;\;\;\;x> 0,\;\;\;\;0<
t< +\infty,
\end{equation}
and initial-boundary data
\begin{equation}\label{ic}
\begin{split}
&q(x,0)=q_0(x),\;\;\;\;0\leq x<+\infty\\
&q(0,t)=Q(t),\;\;\;\;0\leq t<+\infty,\\
\end{split}
\end{equation}
where $q_0, Q$ are classical functions satisfying the
compatibility condition $q_0(0)=Q(0)$.

The case $\lambda =1$ is the defocusing case, while $\lambda =-1$ is the focusing case.

Back in 1991, Carrol and Bu in \cite{CarBu} established the existence of a
unique global classical solution $q\in C^1(L^2)\cap C^0(H^2)$ of
the problem \eqref{nls}-\eqref{ic}, with $q_0\in H^2$, $Q\in
C^2$ and $q_0(0)=Q(0)$, by using PDE theory.

On the other hand, it is well-known \cite{ZaSh} that the non-linear  Schr\"odinger equation
(NLS) with cubic non-linearity can be written as a Lax pair and that, at least the Cauchy problem
is `completely integrable', in other words it can be solved via the scattering transform.

Furthermore, in \cite{FIS} the authors used the Fokas
transform to solve the  problem  on the real
positive semi-axis, given values for the initial data and Dirichlet data
(which make the problem well-posed) and also the
Neumann data $P(t):=q_x(0,t)$. Actually what is required for that
theory to work is that the  Neumann data (as well as the
Dirichlet data) live in some class with nice decaying properties
such that the Fokas scattering transform can be properly defined.
This is exactly the problem we consider here: we will provide
several reasonably inclusive large classes of Dirichlet data,
such that both Dirichlet and Neumann data decay as
$t\rightarrow\infty$ fast enough for the scattering method to
work.

Obviously, our problem is a special case of the general one
treated in \cite{CarBu} and therefore, a global classical
solution $q$ uniquely exists.
But the Fokas transform method arrives at  a Riemann-Hilbert
formulation of the problem on the real
positive semi-axis, and hence long time asymptotics (as in \cite{FIS}) or
semiclassical asymptotics \cite{K} are also possible.

\subsection{Main results and Strategy} In Section 2, we consider
$q(0,t)$, $q_t(0,t)$ with mild polynomial decay as
$t\rightarrow\infty$ and prove that the Neumann data $q_x(0,t)$
is bounded in the $L^2(0,\infty)$-norm, while $q(x,t)$ is bounded
in the $H^1(0,\infty)$-norm in space uniformly for any $t\geq 0$;
these arguments are established for any real $\lambda$.

The defocusing NLS is further analyzed in Section 3. We first demonstrate that the
NLS solution decays to zero for large times. This yields
sharper estimates for the norm $\|q(\cdot,t)\|_{H^1(0,\infty)}$
which are crucial for the proof of the main theorem of this
Section, which states that  if the Dirichlet data $q(0,t)$,
$q_t(0,t)$ and $q_{tt}(0,t)$ have  sufficient
polynomial decay then $q_x(0,t)$ also have good decay and
thus the Fokas method is applicable.

For the focusing NLS, analyzed in
Section 4, the same result is derived under the assumption of decay of the
solution as $t\rightarrow\infty$, as well as a smallness assumption for
$\int_0^\infty|q(0,t)|^2dt$. Finally, in
Section 5 we present  our concluding remarks.

Our strategy for attacking the problem consists of the following steps:
\begin{enumerate}
\item
In Theorem \ref{thm1} we estimate
$$\int_0^\infty|q_x(0,t)|^2dt,$$
in terms of the Dirichlet data.
\item
We prove that if these data decay polynomially, then the
above norm is bounded, while $q$ is bounded in
$H^1(0,\infty)$-norm in space, uniformly for any $t\geq 0$.
\item
For the defocusing NLS, we establish a zero limiting profile of the
solution for large times, cf. Theorem \ref{thm2}.
\item We then proceed (using this decay) to  prove sharper estimates for
$\|q(\cdot,t)\|_{H^1(0,\infty)}$ in Theorem 3.4. This results in
an estimate of
$$\mathcal{A}:=\int_t^\infty|q_x(0,r)|^2dr,$$ in terms of the
Dirichlet data. In the proof of  Theorem 3.6. we
provides a bound for
$$\mathcal{B}:=\int_0^\infty|q_{xt}(0,t)|^2dt.$$
\item
Finally, we estimate $|q_x(0,t)|$ by using the bounds of
$\mathcal{A}$ and $\mathcal{B}$, to obtain the paper's main
result, which is Theorem 3.8.
\item
Considering the focusing NLS, the strategy is similar, but here we need
the assumption that the solution decays at large times, a fact
that we are able to prove only for the defocusing case.
\end{enumerate}


\section{General estimates}
In this section, we prove some basic  inequalities for the NLS
solution, for any real non-zero $\lambda$. Hence, these results are
applicable for both the defocusing and focusing case.

In what follows $(\cdot,\cdot)$ will denote the $L^2(0,\infty)$
inner product in space variables, while $\|\cdot\|$ the induced
norm. Furthermore, we shall use the symbol $\|\cdot\|_4$ for the
$L^4(0,\infty)$ norm in space and for any integer $p\geq 1$,
$\|\cdot\|_{L^p(0,t)}$ will denote the $L^p$ norm in the time
interval $(0,t)$.

The symbol $c$ will be used in general to denote  nonnegative
constants.

Throughout this paper  we make heavy use of the decay
of the solution $q$ of the NLS problem \eqref{nls}-\eqref{ic}
\begin{equation}\label{dc}
q(x,t)\rightarrow
0\;\;\mbox{as}\;\;\;\;x\rightarrow\infty\;\;\;\;\mbox{for
any}\;\;t\geq 0,
\end{equation}
as of course is guaranteed by the existence of a unique classical solution in \cite{CarBu}.
For simplicity, we also assume here that
\begin{equation}\label{ic0}
q(x,0)=q_0(x)=0\;\;\;\;\mbox{for any}\;\;x\geq 0.
\end{equation}
This simplifying assumption will be dropped in a forthcoming addendum to this paper.

The next Theorem presents a first general estimate in
$L^2(0,t)$-norm of the Neumann data $q_x(0,\cdot)$ valid for
$t\rightarrow\infty$ also. In fact, in order to obtain a uniform
estimate for any $t\geq 0$, we extent the proof of \cite{FIS}, cf.
Appendix D1, which was only established for $t$ taking values in a
bounded domain, thus involving  constants that may depend on its
upper bound.

\begin{theorem}\label{thm1} Let $q$ be the
unique global classical solution $q\in C^1(L^2)\cap C^0(H^2)$ of
the problem \eqref{nls}-\eqref{ic}, with  $Q\in
C^2$ and $Q(0)=0$,
under the 
assumption \eqref{ic0}. It holds that for any $t\geq 0$
\begin{equation}\label{mainineq1}
\begin{split}
\Big{(}\int_0^t|q_x(0,s)|^2ds\Big{)}^{1/2}=\|q_x(0,\cdot)\|_{L^2(0,t)}&\leq
c\|q(0,\cdot)\|_{L^2(0,t)}^{1/2}\|q_t(0,\cdot)\|_{L^2(0,t)}^{1/2}+c_1\|q(0,\cdot)\|_{L^4(0,t)}^2,
\end{split}
\end{equation}
for $c$ positive constant independent of $t$, $c_1\geq 0$
constant independent of $t$ with $c_1=0$ if $\lambda<0$ and
$c_1>0$ if $\lambda>0$.

For the case $\lambda<0$, we also impose the additional
assumption that $\|q(0,\cdot)\|_{L^2(0,t)}^2\leq c_2$ uniformly
in $t\geq 0$, for some $c_2>0$ sufficiently small.

The inequality \eqref{mainineq1} is also true when $t$ is
replaced by $\infty$.
\end{theorem}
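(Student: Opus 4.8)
The plan is to derive an \emph{exact} space--time identity for $\int_0^t|q_x(0,s)|^2\,ds$ and then estimate its right-hand side, closing the argument through a quadratic inequality. First I would multiply \eqref{nls} by $\bar q_x$, integrate over $x\in(0,\infty)$, and take real parts. Using the decay \eqref{dc} (together with the attendant decay of $q_x$ and $q_t$) to kill the contributions at $x=+\infty$, the terms $\int_0^\infty q_{xx}\bar q_x\,dx$ and $\int_0^\infty|q|^2q\,\bar q_x\,dx$ collapse to boundary values, giving the pointwise-in-time identity
\begin{equation*}
\tfrac12|q_x(0,t)|^2=-\,\mathrm{Im}\!\int_0^\infty q_t\,\bar q_x\,dx+\tfrac{\lambda}{2}|q(0,t)|^4 .
\end{equation*}
Integrating in $t$ over $(0,t)$ reduces matters to computing $\mathrm{Im}\,J$, where $J:=\int_0^t\!\int_0^\infty q_t\,\bar q_x\,dx\,ds$.

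Next I would evaluate $\mathrm{Im}\,J$ by integrating by parts once in $x$ and once in $t$. The $x$-integration produces the boundary term $-\int_0^t q_t(0,s)\bar q(0,s)\,ds$, while the $t$-integration, using the hypothesis $q(x,0)=0$ from \eqref{ic0}, produces the spatial term $\int_0^\infty q_x(x,t)\bar q(x,t)\,dx$ at the final time and returns $\bar J$. Subtracting $\bar J$ and extracting imaginary parts yields the key identity
\begin{equation*}
\int_0^t|q_x(0,s)|^2\,ds=\mathrm{Im}\!\int_0^t q_t(0,s)\bar q(0,s)\,ds+\mathrm{Im}\!\int_0^\infty q_x(x,t)\bar q(x,t)\,dx+\lambda\!\int_0^t|q(0,s)|^4\,ds .
\end{equation*}
The first term is bounded by Cauchy--Schwarz by $\|q(0,\cdot)\|_{L^2(0,t)}\|q_t(0,\cdot)\|_{L^2(0,t)}$, and the last term is precisely the source of the $c_1\|q(0,\cdot)\|_{L^4(0,t)}^2$ contribution, with the favourable sign when $\lambda<0$ (so it may be discarded, giving $c_1=0$) and the unfavourable sign when $\lambda>0$.

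The heart of the matter is the spatial term $\mathrm{Im}\int_0^\infty q_x(x,t)\bar q(x,t)\,dx$, which Cauchy--Schwarz bounds by $\|q_x(\cdot,t)\|\,\|q(\cdot,t)\|$. To control it I would invoke the two balance laws obtained by multiplying \eqref{nls} by $\bar q$ and by $\bar q_t$: the mass law gives $\|q(\cdot,t)\|^2=2\int_0^t\mathrm{Im}\big(q_x(0,s)\bar q(0,s)\big)\,ds\le 2\,\|q_x(0,\cdot)\|_{L^2(0,t)}\|q(0,\cdot)\|_{L^2(0,t)}$, while the Hamiltonian law controls $\|q_x(\cdot,t)\|^2$ by $2\,\|q_x(0,\cdot)\|_{L^2(0,t)}\|q_t(0,\cdot)\|_{L^2(0,t)}$ once the nonnegative term $\lambda\int|q|^4\,dx$ is discarded in the case $\lambda>0$. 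Writing $X:=\|q_x(0,\cdot)\|_{L^2(0,t)}$ and $P:=\|q(0,\cdot)\|_{L^2(0,t)}^{1/2}\|q_t(0,\cdot)\|_{L^2(0,t)}^{1/2}$, these bounds turn the identity into a quadratic inequality of the shape $X^2\le P^2+2PX+\lambda\|q(0,\cdot)\|_{L^4(0,t)}^4$; solving for $X$ and using $\sqrt{a+b}\le\sqrt a+\sqrt b$ delivers exactly \eqref{mainineq1}.

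The main obstacle is closing this self-referential estimate, and especially the focusing case $\lambda<0$: there the Hamiltonian law carries the \emph{positive} term $|\lambda|\int|q|^4\,dx$, which cannot be dropped. I would estimate it by the one-dimensional Gagliardo--Nirenberg inequality $\|q(\cdot,t)\|_{4}^4\le C\|q(\cdot,t)\|^3\|q_x(\cdot,t)\|$ and then absorb the result into $\|q_x(\cdot,t)\|^2$ by means of the smallness hypothesis $\|q(0,\cdot)\|_{L^2(0,t)}^2\le c_2$, which via the mass law keeps $\|q(\cdot,t)\|$ small; this is exactly where that assumption enters. Finally, since each integrand on the left is nonnegative and all constants are independent of $t$, the estimates are uniform in $t$, so the statement with $t$ replaced by $\infty$ follows by monotone passage to the limit.
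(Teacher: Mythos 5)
Your proposal is correct and follows essentially the same route as the paper: the same three multiplier identities (multiplying \eqref{nls} by $\bar q$, $\bar q_t$, and $\bar q_x$), the same Cauchy--Schwarz bounds on the resulting boundary terms, the same Gagliardo--Nirenberg estimate $\|q\|_4^4\le\|q\|^3\|q_x\|$ to handle the bad-signed $L^4$ term in the focusing case, and the same use of the smallness of $\|q(0,\cdot)\|_{L^2(0,t)}^2$ to absorb the term quadratic in $\|q_x(0,\cdot)\|_{L^2(0,t)}$. The only (cosmetic) differences are that you obtain the key identity by a space--time double integration by parts rather than the paper's pointwise-in-time relation \eqref{ja1} integrated in $t$, and that you close by solving a quadratic inequality in $X=\|q_x(0,\cdot)\|_{L^2(0,t)}$ instead of the paper's Young-type absorption with small constants.
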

\begin{remark}
We observe that in Appendix D1 of \cite{FIS} relation (D.10) is
proven for bounded time intervals $(0,T)$ and involving a general
continuous map of the $H^1(0,T)$ norm of $q(0,\cdot)=Q(t)$. In
this theorem we shall include the case $t\rightarrow \infty$,
while in view of \eqref{mainineq1}, we specify this map in terms
of specific powers of the appearing norms.
\end{remark}
\begin{proof}
Multiplying \eqref{nls} by $\bar{q}$ and integrating in
$x\in(0,\infty)$ we obtain after applying integration by parts
$${\rm
i}(q_t,q)-\|q_x\|^2-q_x(0,t)\bar{q}(0,t)-2\lambda(|q|^2q,q)=0.$$
Taking imaginary parts we arrive at
\begin{equation}\label{j**}
\frac{d}{dt}\|q\|^2=2{\rm Im}\{q_x(0,t)\bar{q}(0,t)\},
\end{equation}
 which after
integration in time gives
\begin{equation}\label{mt2}
\|q(\cdot,t)\|^2\leq
2\|q_x(0,\cdot)\|_{L^2(0,t)}\|q(0,\cdot)\|_{L^2(0,t)}.
\end{equation}

Multiplying now \eqref{nls} by $\bar{q_t}$ and integrating in
$x\in(0,\infty)$ we obtain
$${\rm i}
\|q_t\|^2-(q_x,q_{xt})-q_x(0,t)\bar{q_t}(0,t)-2\lambda(|q|^2q,q_t)=0,$$
while taking real parts and since
$$\frac{d}{dt}[|q|^2|q|^2]=4{\rm Re}\{|q|^2q\bar{q_t}\},$$ we have
\begin{equation}\label{j0}
\frac{d}{dt}\|q_x\|^2=-\lambda\frac{d}{dt}\||q|^2\|^2-2{\rm
Re}\{q_x(0,t)\bar{q_t}(0,t)\}.
\end{equation}
 Integration in time yields
$$\|q_x(\cdot,t)\|^2=-\lambda\||q(\cdot,t)|^2\|^2-2\int_0^t{\rm Re}\{q_x(0,s)\bar{q_t}(0,s)\}ds,$$
and thus,
\begin{equation}\label{mt3}
\|q_x(\cdot,t)\|^2+\lambda\|q(\cdot,t)\|_{L^4(0,\infty)}^4\leq
2\|q_x(0,\cdot)\|_{L^2(0,t)}\|q_t(0,\cdot)\|_{L^2(0,t)}.
\end{equation}

We now multiply \eqref{nls} with $\bar{q_x}$ integrate in space
and take real parts which yields
$$-{\rm Im}(q_t,q_x)+{\rm Re}(q_{xx},q_x)-2\lambda{\rm
Re}(|q|^2q,q_x)=0.$$ Using that
$${\rm Im}(q_t,q_x)=-\frac{1}{2}{\rm
i}\frac{d}{dt}(q,q_x)-\frac{1}{2}{\rm i}q(0,t)\bar{q_t}(0,t),$$
and the relations
$${\rm Re}(q_{xx},q_x)=-\frac{1}{2}|q_x(0,t)|^2,\;\;\;\;{\rm
Re}(|q|^2q,q_x)=-\frac{1}{4}|q(0,t)|^4,$$ we obtain
\begin{equation}\label{ja1}
|q_x(0,t)|^2={\rm i}\frac{d}{dt}(q,q_x)+{\rm
i}q(0,t)\bar{q_t}(0,t)+\lambda|q(0,t)|^4.
\end{equation}
 Integrating the above
in time, we get
$$\int_0^t|q_x(0,s)|^2ds={\rm i}(q(\cdot,t),q_x(\cdot,t))+{\rm
i}\int_0^tq(0,s)\bar{q_t}(0,s)ds+\lambda\int_0^t|q(0,s)|^4ds,$$
and so,
\begin{equation}\label{mt4}
\|q_x(0,\cdot)\|_{L^2(0,t)}^2\leq\|q(\cdot,t)\|\|q_x(\cdot,t)\|+
\|q(0,\cdot)\|_{L^2(0,t)}\|q_t(0,\cdot)\|_{L^2(0,t)}+\lambda\|q(0,\cdot)\|_{L^4(0,t)}^4.
\end{equation}
We shall estimate $\|q(\cdot,t)\|\|q_x(\cdot,t)\|$. Indeed, by
\eqref{mt2} and \eqref{mt3} and the Sobolev inequality
$$\|q(\cdot,t)\|_{L^4(0,\infty)}^4\leq \|q(\cdot,t)\|^3\|q_x(\cdot,t)\|,$$
we arrive at
\begin{equation*}
\begin{split}
\|q(\cdot,t)\|\|q_x(\cdot,t)\|\leq &
c\|q_x(0,\cdot)\|_{L^2(0,t)}\|q(0,\cdot)\|_{L^2(0,t)}^{1/2}\|q_t(0,\cdot)\|_{L^2(0,t)}^{1/2}\\
&+\tilde{c}\|q_x(0,\cdot)\|_{L^2(0,t)}^{1/2}\|q(0,\cdot)\|_{L^2(0,t)}^{1/2}\|q(\cdot,t)\|^{3/2}\|q_x(\cdot,t)\|^{1/2}\\
\leq &c\|q_x(0,\cdot)\|_{L^2(0,t)}\|q(0,\cdot)\|_{L^2(0,t)}^{1/2}\|q_t(0,\cdot)\|_{L^2(0,t)}^{1/2}\\
&+\tilde{c}c\|q_x(0,\cdot)\|_{L^2(0,t)}\|q(0,\cdot)\|_{L^2(0,t)}\|q(\cdot,t)\|^2+
\tilde{c}c_0\|q(\cdot,t)\|\|q_x(\cdot,t)\|,
\end{split}
\end{equation*}
where $\tilde{c}=0$ if $\lambda=1$ and $c_0$ can be made as small as we want.
Hence, using the above and the estimate \eqref{mt2} for $\|q\|$,
we have
\begin{equation}\label{mt5}
\begin{split}
\|q(\cdot,t)\|\|q_x(\cdot,t)\|\leq
&c\|q_x(0,\cdot)\|_{L^2(0,t)}\|q(0,\cdot)\|_{L^2(0,t)}^{1/2}
\|q_t(0,\cdot)\|_{L^2(0,t)}^{1/2}\\
&+\tilde{c}c\|q_x(0,\cdot)\|_{L^2(0,t)}\|q(0,\cdot)\|_{L^2(0,t)}\|q(\cdot,t)\|^2\\
\leq
&c\|q_x(0,\cdot)\|_{L^2(0,t)}\|q(0,\cdot)\|_{L^2(0,t)}^{1/2}\|q_t(0,\cdot)\|_{L^2(0,t)}^{1/2}
+\tilde{c}c\|q_x(0,\cdot)\|_{L^2(0,t)}^2\|q(0,\cdot)\|_{L^2(0,t)}^2\\
\leq &
\hat{c}_0\|q_x(0,\cdot)\|_{L^2(0,t)}^2+c\|q(0,\cdot)\|_{L^2(0,t)}\|q_t(0,\cdot)\|_{L^2(0,t)}\\
&+\tilde{c}c\|q_x(0,\cdot)\|_{L^2(0,t)}^2\|q(0,\cdot)\|_{L^2(0,t)}^2,
\end{split}
\end{equation}
where again $\hat{c}_0$ can be made as small as we want.
Using now \eqref{mt5} in \eqref{mt4} we obtain
\begin{equation*}
\begin{split}
\|q_x(0,\cdot)\|_{L^2(0,t)}^2\leq&\hat{c}_0\|q_x(0,\cdot)\|_{L^2(0,t)}^2
+c\|q(0,\cdot)\|_{L^2(0,t)}\|q_t(0,\cdot)\|_{L^2(0,t)}\\
&+\tilde{c}c\|q_x(0,\cdot)\|_{L^2(0,t)}^2\|q(0,\cdot)\|_{L^2(0,t)}^2+
\|q(0,\cdot)\|_{L^2(0,t)}\|q_t(0,\cdot)\|_{L^2(0,t)}+\lambda\|q(0,\cdot)\|_{L^4(0,t)}^4,
\end{split}
\end{equation*}
and thus we  obtain \eqref{mainineq1}, i.e.
\begin{equation*}
\begin{split}
\|q_x(0,\cdot)\|_{L^2(0,t)}^2\leq
c\|q(0,\cdot)\|_{L^2(0,t)}\|q_t(0,\cdot)\|_{L^2(0,t)}+c_1\|q(0,\cdot)\|_{L^4(0,t)}^4,
\end{split}
\end{equation*}
where $c_1=c_1(\lambda)=0$ if $\lambda=-1$, because
we had made $\hat{c}_0$ appropriately small; when $\lambda=-1$,
i.e. when $\tilde{c}\neq 0$ we use  the assumption that
$\|q(0,\cdot)\|_{L^2(0,t)}^2$ is small uniformly in $t$.

\end{proof}

\begin{remark}\label{rem1}Under the assumptions of the previous Theorem \ref{thm1} and
due to  estimate \eqref{mainineq1}, we note that if $q(0,t)$,
$q_t(0,t)$, have polynomial decay of order
$\mathcal{O}(t^{-\alpha})$, $\mathcal{O}(t^{-\beta})$ respectively
as $ t \to \infty,$ with $\alpha,\beta>1/2$ then:\\
\\
\\
1) It holds that
\begin{equation}\label{unifqx}\int_0^\infty
|q_x(0,t)|^2dt<\infty.
\end{equation}
\\
\\
\\
2) Furthermore for $\alpha$, $\beta$ as in 1), the estimate
\eqref{unifqx}, together with \eqref{mt2} and \eqref{mt3} yield
that there exists $c>0$ independent of $t$, such that for any
$t\geq 0$
\begin{equation}\label{j***}
\|q(\cdot,t)\|\leq c,
\end{equation}
and
\begin{equation}\label{j***1}
\|q_x(\cdot,t)\|\leq c.
\end{equation}
(Note that in order to prove the inequality \eqref{j***1}, and
since an $L^4$-norm term is present in \eqref{mt3}, we use
additionally the Sobolev Embedding Theorem
$$\|q\|_4^4\leq \|q\|^3\|q_x\|\leq c\|q\|^6+c_0\|q_x\|^2,$$
where $c_0>0$ can be made as small as we want.)

For 
$\alpha,\beta$ sufficiently large, we shall eventually prove
sharper estimates for these norms.\\
\\
\\
3) In addition, we have that there exists $c>0$ independent of
$t$ such that for any $t\geq 0$
\begin{equation}\label{limq}
|q(x,t)|\leq c\|q(\cdot,t)\|^{1/2}\|q_x(\cdot,t)\|^{1/2}\leq c,
\end{equation}
and so, $$\displaystyle{\lim_{t \to \infty}}|q(x,t)|\leq c,$$ for
any $x$, if the limit exists.
\end{remark}

\section{Defocusing NLS}\label{def}
We now proceed by considering the case $\lambda=1$, i.e. the
defocusing NLS equation.

Our first aim is to establish, under certain assumptions for the
initial data, that the solution of \eqref{nls}-\eqref{ic} when
$\lambda=1$ decays to $0$ for any $x$ as $t\rightarrow \infty$.
This is achieved by proving that the $L^4$ norm of the solution
decays in time.

More specifically, we prove the next main theorem.
\begin{theorem}\label{thm2} Let $q$ be the
unique global classical solution $q\in C^1(L^2)\cap C^0(H^2)$ of
the problem \eqref{nls}-\eqref{ic}, with  $Q\in
C^2$ and $Q(0)=0$,
with $\lambda=1$,
under the assumption  \eqref{ic0}. Furthermore,
assume that as $t\rightarrow\infty$
$$q(0,t)= \mathcal{O}(t^{-\alpha}),\;\;\;\;q_t(0,t)=
\mathcal{O}(t^{-\beta}),\;\;\;\;\mbox{for}\;\;\alpha>
3/2\;\;\;\mbox{and}\;\;\beta> 5/2.$$ It holds that there exists a
positive constant $c$ independent of $t$ such that
\begin{equation}\label{j2}
\int_0^\infty|q(x,t)|^4dx:=\|q(\cdot,t)\|_4^4\leq
\frac{c}{t}\;\;\;\;\mbox{for any}\;\;t\geq 1.
\end{equation}
\end{theorem}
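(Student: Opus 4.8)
The plan is to run a pseudoconformal (time--weighted virial) identity on the half--line and to absorb every boundary contribution into the decaying Dirichlet data. I would introduce
\[
V(t)=\int_0^\infty x^2|q|^2\,dx,\qquad M(t)=\int_0^\infty x\,\mathrm{Im}\{\bar q q_x\}\,dx,
\]
together with the energy $E(t)=\|q_x\|^2+\lambda\|q\|_4^4$ already appearing in \eqref{j0}, and assemble the functional
\[
P(t):=V(t)-4tM(t)+4t^2E(t)=\big\|(x+2it\partial_x)q(\cdot,t)\big\|^2+4\lambda t^2\|q(\cdot,t)\|_4^4 .
\]
For the defocusing equation $\lambda=1$ both terms on the right are nonnegative, so $P(t)\ge 4t^2\|q(\cdot,t)\|_4^4$, while $P(0)=0$ by \eqref{ic0}.

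Differentiating $P$ requires the three evolution identities
\[
V'(t)=4M(t),\quad M'(t)=2\|q_x\|^2+\lambda\|q\|_4^4+\mathrm{Re}\{\bar q(0,t)q_x(0,t)\},\quad E'(t)=-2\,\mathrm{Re}\{q_x(0,t)\bar q_t(0,t)\},
\]
the last being \eqref{j0}; the first two I would obtain by pairing \eqref{nls} with $x^2\bar q$ and with $x\bar q_x$ and using \eqref{nls} to trade time derivatives for space derivatives, the weights $x^2$ and $x$ killing all endpoint contributions except the one displayed in $M'$. The bulk terms then cancel exactly as on the whole line and leave
\[
P'(t)=4\lambda t\|q(\cdot,t)\|_4^4+R(t),\qquad R(t)=-4t\,\mathrm{Re}\{\bar q(0,t)q_x(0,t)\}-8t^2\,\mathrm{Re}\{q_x(0,t)\bar q_t(0,t)\}.
\]

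The decisive observation is that $R$ is absolutely integrable in time. By Cauchy--Schwarz,
\[
\int_0^t|R(s)|\,ds\le 4\Big(\int_0^t s^2|q(0,s)|^2\,ds\Big)^{1/2}\|q_x(0,\cdot)\|_{L^2(0,t)}+8\Big(\int_0^t s^4|q_t(0,s)|^2\,ds\Big)^{1/2}\|q_x(0,\cdot)\|_{L^2(0,t)},
\]
where $\|q_x(0,\cdot)\|_{L^2(0,t)}$ is bounded uniformly in $t$ by \eqref{unifqx}. The hypotheses $q(0,t)=\mathcal{O}(t^{-\alpha})$ with $\alpha>3/2$ and $q_t(0,t)=\mathcal{O}(t^{-\beta})$ with $\beta>5/2$ are precisely what make $s^2|q(0,s)|^2=\mathcal{O}(s^{2-2\alpha})$ and $s^4|q_t(0,s)|^2=\mathcal{O}(s^{4-2\beta})$ integrable at infinity, so that $\int_0^\infty|R(s)|\,ds=:C_0<\infty$. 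Writing $g(t)=\|q(\cdot,t)\|_4^4$ and $G(t)=\int_0^t s\,g(s)\,ds$, integration of $P'$ with $P(0)=0$ and the lower bound $P\ge 4t^2 g$ gives, for $\lambda=1$, $4t^2g(t)\le 4G(t)+C_0$, that is $\frac{d}{dt}\big(G(t)/t\big)\le C_0/(4t^2)$. Integrating from $1$ to $t$ yields $G(t)\le c\,t$, and feeding this back gives $g(t)\le c/t+C_0/(4t^2)\le c/t$ for $t\ge1$, which is \eqref{j2}.

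The genuine difficulty is not this last ODE step but the justification of the objects defining $P$, since finiteness of $V(t)$ and of $\|(x+2it\partial_x)q\|^2$ is not part of the classical well--posedness of \cite{CarBu}. Here the uniform bounds already proved are decisive: the right--hand side of the identity for $M'$ is bounded in $t$ (because $\|q_x(\cdot,t)\|$ and $\|q(\cdot,t)\|_4$ are bounded by \eqref{mt2}, \eqref{mt3} and Remark \ref{rem1}, and the boundary term $\mathrm{Re}\{\bar q(0,t)q_x(0,t)\}$ is integrable), so $M$ grows at most linearly and $V(t)=V(0)+4\int_0^t M\le c\,t^2$ stays finite for every $t$. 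To make this rigorous I would carry out the whole computation with $x^2$ and $x$ replaced by smooth truncations $\phi_R$ equal to $x^2$ on $[0,R]$ and constant beyond $2R$; this keeps all weights compactly supported, reduces every integration by parts to the available $C^0(H^2)$ regularity (the formally appearing $q_{xxx}$ cancels), and produces remainders supported in $\{R\le x\le 2R\}$ and dominated by $\int_R^{2R}(|q|^2+|q_x|^2+|q|^4)\,dx$, which vanish as $R\to\infty$ by \eqref{dc} and the uniform $H^1$ bound. Passing to the limit then yields both the identities and the finiteness needed to close the argument.
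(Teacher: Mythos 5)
Your proposal is correct and follows essentially the same route as the paper: your functional $P(t)=V-4tM+4t^2E=\|(x+2it\partial_x)q\|^2+4\lambda t^2\|q\|_4^4$ is exactly the combination the paper manipulates via its identities \eqref{j13}, \eqref{j15}, \eqref{j20}, the boundary remainder $R$ coincides with the paper's $F(P,Q,Q_t,t)$ and is bounded in the same way (Cauchy--Schwarz plus \eqref{unifqx} with $\alpha>3/2$, $\beta>5/2$), and the closing ODE step is the paper's Gronwall argument. The only difference is cosmetic packaging, plus your truncation discussion justifying finiteness of $V$ and $M$, which the paper omits and which is a welcome addition rather than a different method.
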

\begin{proof}
Again let the Dirichlet data be $$Q(t):=q(0,t),$$ and
let us denote the Neumann data by $$P(t):=q_x(0,t).$$

We set
$$A(t):=\int_0^\infty \Big{(}|q_x(x,t)|^2+|q(x,t)|^4\Big{)}dx.$$ Then by
\eqref{j0} we have
\begin{equation}\label{j4}
\partial_t A(t)=-2{\rm Re}\{q_x(0,t)\bar{q_t}(0,t)\}=-2{\rm
Re}\{P(t)\bar{Q}(t)\}.
\end{equation}

We now define
$$y(t)={\rm Im}\int_0^\infty x\bar{q}(x,t)q_x(x,t)dx,$$
to obtain
\begin{equation}\label{j5}
\begin{split}
y_t={\rm Im}\int_0^\infty(x\bar{q_t}q_x+x\bar{q}q_{xt})dx=& {\rm
Im}\int_0^\infty(x\bar{q_t}q_x-\bar{q}q_t-x\bar{q_x}q_t)dx+{\rm{Im}}[x\bar{q}q_t]_{x=0}^{\infty}\\
=&-{\rm Im}\int_0^\infty\bar{q}q_t dx+2{\rm Im}\int_0^\infty
x\bar{q_t}q_xdx.
\end{split}
\end{equation}
Using the NLS equation \eqref{nls}, we have
$${\rm i}(xq_t,q_x)+(xq_{xx},q_x)-2(x|q|^2q,q_x)=0,$$
which gives, by taking real parts,
\begin{equation}\label{j6}
-{\rm Im}(xq_t,q_x)+{\rm Re}(xq_{xx},q_x)-2{\rm
Re}(x|q|^2q,q_x)=0.
\end{equation}
We observe that
$$(xq_{xx},q_x)=-(q_x,q_x)-(xq_x,q_{xx})+[x|q_x|^2]_{x=0}^{\infty},$$
and thus,
\begin{equation}\label{j7}
{\rm Re}(xq_{xx},q_x)=-\frac{1}{2}\|q_x\|^2.
\end{equation}
Furthermore,
$$(x|q|^2q,q_x)=-\int_0^\infty|q|^4dx-\int_0^\infty
x[2q_x|q|^2\bar{q}+q\bar{q_x}|q|^2]dx,$$ so, we obtain
\begin{equation}\label{j8}
{\rm Re}(x|q|^2q,q_x)=-\frac{1}{4}\int_0^\infty|q|^4dx.
\end{equation}
Relations \eqref{j6}, \eqref{j7}, \eqref{j8}, yield
\begin{equation}\label{j9}
0=-{\rm
Im}(xq_t,q_x)-\frac{1}{2}\|q_x\|^2+\frac{1}{2}\int_0^\infty|q|^4dx.
\end{equation}
In addition, the NLS equation \eqref{nls} gives
$${\rm i}(q_t,q)+(q_{xx},q)-2(|q|^2q,q)=0,$$
thus, taking real parts, we obtain
\begin{equation}\label{j10}
-{\rm Im}(q_t,q)+{\rm Re}(q_{xx},q)-2{\rm Re}(|q|^2q,q)=0,
\end{equation}
while
\begin{equation}\label{j11}
(q_{xx},q)=-\|q_x\|^2+[q_x\bar{q}]_{x=0}^\infty=-\|q_x\|^2-q_x(0,t)\bar{q}(0,t).
\end{equation}
By \eqref{j10}, \eqref{j11} we have
\begin{equation}\label{j12}
0=-{\rm Im}(q_t,q)-\|q_x\|^2-{\rm
Re}\{q_x(0,t)\bar{q}(0,t)\}-2\int_0^\infty |q|^4dx.
\end{equation}
Therefore, \eqref{j5}, together with \eqref{j12}, and \eqref{j9}
give
\begin{equation}\label{j13}
\begin{split}
y_t(t)=&2\|q_x\|^2+{\rm
Re}\{q_x(0,t)\bar{q}(0,t)\}+\int_0^\infty|q|^4dx\\
=&{\rm Re}P(t)\bar{Q}(t)+A(t)+\int_0^\infty |q_x(x,t)|^2dx.
\end{split}
\end{equation}

Observe now that
\begin{equation}\label{j14}
\partial_t\int_0^\infty x^2|q|^2dx=\int_0^\infty
x^2[q_t\bar{q}+q\bar{q_t}]dx=2{\rm Re}\int_0^\infty
x^2q_t\bar{q}dx.
\end{equation}
Multiplying the NLS equation \eqref{nls} by $x^2$ we obtain
\begin{equation*}
\begin{split}
0=&{\rm i}(x^2q_t,q)+(x^2q_{xx},q)-2(x^2|q|^2q,q)\\
=&{\rm
i}(x^2q_t,q)-(2xq_x,q)-(x^2q_x,q_x)+[x^2q_x\bar{q}]_{x=0}^\infty-2\int_0^\infty
x^2|q|^4dx\\
=&{\rm i}(x^2q_t,q)-(2xq_x,q)-(x^2q_x,q_x)-2\int_0^\infty
x^2|q|^4dx.
\end{split}
\end{equation*}
So, taking imaginary parts yields
$$
2{\rm Re}(x^2q_t,q)=2{\rm Im}(2xq_x,q)=4y,
$$
which gives by \eqref{j14}
\begin{equation}\label{j15}
\partial_t\int_0^\infty x^2|q|^2dx=4y.
\end{equation}

But, we have
$$4ty_t=\partial_t(4ty)-4y,$$
and thus, by \eqref{j13} and \eqref{j15}, we obtain
$$4t{\rm Re}\{P\bar{Q}\}+4tA+4t\int_0^\infty
|q_x|^2dx=\partial_t(4ty)-\partial_t\int_0^\infty x^2|q|^2dx,$$
and equivalently, replacing $A$
\begin{equation}\label{j16}
\begin{split}
\partial_t(4ty)=&4t{\rm
Re}\{P\bar{Q}\}+4tA+4t\int_0^\infty|q_x|^2dx+\partial_t\int_0^\infty
x^2|q|^2dx\\
=&4t{\rm
Re}\{P\bar{Q}\}+8t\int_0^\infty|q_x|^2dx+4t\int_0^\infty
|q|^4dx+\partial_t\int_0^\infty x^2|q|^2dx\\
=&4t{\rm
Re}\{P\bar{Q}\}+\partial_t\Big{(}4t^2\int_0^\infty|q_x|^2dx\Big{)}-4t^2\int_0^\infty
\partial_t|q_x|^2dx+4t\int_0^\infty |q|^4dx+\partial_t\int_0^\infty x^2|q|^2dx.
\end{split}
\end{equation}
So, using \eqref{j16} we arrive at
\begin{equation}\label{j17}
\begin{split}
&\partial_t\Big{[}4ty-\int_0^\infty
x^2|q|^2dx-4t^2\int_0^\infty|q_x|^2dx\Big{]}=\\
&4t{\rm
Re}\{P\bar{Q}\}-4t^2\int_0^\infty\partial_t|q_x|^2dx+4t\int_0^\infty|q|^4dx.
\end{split}
\end{equation}
Using \eqref{j0} in \eqref{j17}, we obtain
\begin{equation}\label{j18}
\begin{split}
&\partial_t\Big{[}4ty-\int_0^\infty
x^2|q|^2dx-4t^2\int_0^\infty|q_x|^2dx\Big{]}=\\
&4t{\rm
Re}\{P\bar{Q}\}+4t^2\partial_t\Big{(}\int_0^\infty|q|^4dx\Big{)}+8t^2{\rm
Re}\{P\bar{Q_t}\}+4t\int_0^\infty|q|^4dx.
\end{split}
\end{equation}
But,
\begin{equation*}
\begin{split}
4t^2\partial_t\Big{(}\int_0^\infty
|q|^4dx\Big{)}+4t\int_0^\infty|q|^4dx=&\partial_t\Big{(}4t^2\int_0^\infty|q|^4dx\Big{)}
-4t\int_0^\infty|q|^4dx,
\end{split}
\end{equation*}
and therefore, \eqref{j18} yields
\begin{equation*}
\begin{split}
&\partial_t\Big{[}4ty-\int_0^\infty x^2|q|^2dx-4t^2\int_0^\infty
|q_x|^2dx-4t^2\int_0^\infty|q|^4dx\Big{]}\\
&=4t{\rm Re}\{P\bar{Q}\}+8t^2{\rm
Re}\{P\bar{Q_t}\}-4t\int_0^\infty|q|^4dx,
\end{split}
\end{equation*}
and so,
\begin{equation*}
\begin{split}
\partial_t\Big{(}t^2\int_0^\infty|q|^4dx\Big{)}=&\frac{1}{4}\partial_t\Big{[}4ty-\int_0^\infty
x^2|q|^2dx-4t^2\int_0^\infty|q_x|^2dx\Big{]}\\
&-t{\rm
Re}\{P\bar{Q}\}-2t^2{\rm
Re}\{P\bar{Q_t}\}+t\int_0^\infty|q|^4dx.
\end{split}
\end{equation*}
Integrating the above in time in the interval $(0,t)$, we get
\begin{equation}\label{j19}
\begin{split}
t^2\|q(\cdot,t)\|_4^4=&\frac{1}{4}\Big{[}4ty-\int_0^\infty
x^2|q(x,t)|^2dx-4t^2\int_0^\infty|q_x(x,t)|^2dx\Big{]}\\
&-\int_0^tr{\rm Re}\{P(r)\bar{Q}(r)\}dr-
2\int_0^tr^2{\rm
Re}\{P(r)\bar{Q_r}(r)\}dr+\int_0^tr\|q(\cdot,r)\|_4^4dr.
\end{split}
\end{equation}
Observe now that
\begin{equation}\label{j20}
4ty=\int_0^\infty[x^2|q|^2+4t^2|q_x|^2-|xq+2{\rm i}tq_x|^2]dx.
\end{equation}
While by \eqref{j19} we have
\begin{equation}\label{j21}
\begin{split}
t^2\|q(\cdot,t)\|_4^4=\frac{1}{4}\Big{[}&\int_0^\infty(x^2|q(x,t)|^2+4t^2|q_x(x,t)|^2-|xq+2{\rm
i}tq_x(x,t)|^2-x^2|q(x,t)|^2)dx\\
&-4t^2\int_0^\infty|q_x(x,t)|^2dx\Big{]}\\
-&\int_0^tr{\rm
Re}\{P(r)\bar{Q}(r)\}dr-2\int_0^tr^2{\rm
Re}\{P(r)\bar{Q_r}(r)\}dr+\int_0^tr\|q(\cdot,r)\|_4^4dr\\
=&\frac{1}{4}\Big{[}-\int_0^\infty|xq(x,t)+2{\rm
i}tq_x(x,t)|^2dx\Big{]}\\
&-\int_0^t r{\rm
Re}\{P(r)\bar{Q}(r)\}dr-2\int_0^tr^2{\rm
Re}\{P(r)\bar{Q_r}(r)\}dr+\int_0^tr\|q(\cdot,r)|_4^4dr.
\end{split}
\end{equation}
Hence, \eqref{j21} gives
\begin{equation}\label{j22}
t^2\|q(\cdot,t)\|_4^4\leq \int_0^t
r\|q(\cdot,r)\|_4^4dr+F(P,Q,Q_t,t),
\end{equation}
for
$$F(P,Q,Q_t,t):=-\int_0^t r{\rm
Re}\{P(r)\bar{Q}(r)\}dr-2\int_0^t r^2{\rm
Re}\{P(r)\bar{Q_r}(r)\}dr.$$

Next, we prove that there exists $c>0$ independent of
$t$ such that for any $t\geq 0$,
\begin{equation}\label{j25}
|F(P,Q,Q_t,t)|\leq c.
\end{equation}
Indeed,  if $a>3/2$ and $\beta>5/2$, then we have for
$F=F(P(t),Q(t),Q_t(t),t)$
\begin{equation*}
\begin{split}
|F|\leq &c\Big{(}\int_0^t|P(r)|^2dr\Big{)}^{1/2}\Big{(}\int_0^t
r^2|Q(r)|^2dr\Big{)}^{1/2}+c\Big{(}\int_0^t|P(r)|^2dr\Big{)}^{1/2}\Big{(}\int_0^tr^4|Q_r(r)|^2dr\Big{)}^{1/2}\\
\leq & c\Big{(}\int_0^t|P(r)|^2dr\Big{)}^{1/2}\leq c,
\end{split}
\end{equation*}
since \eqref{unifqx} is true. Thus, \eqref{j25} follows.

Using now \eqref{j25} at \eqref{j22}, we obtain that there exists
$c>0$ independent of $t$ such that
\begin{equation}\label{j26}
t^2\|q(\cdot,t)\|_4^4\leq \int_0^t r\|q(\cdot,r)\|_4^4dr+c,
\end{equation}
and therefore, using \eqref{mt3} for $\lambda>0$, we have for any
$t\geq 1$
\begin{equation*}
\begin{split}
t^2\|q(\cdot,t)\|_4^4\leq &\int_0^1 r\|q(\cdot,r)\|_4^4dr+\int_1^t
r\|q(\cdot,r)\|_4^4dr+c\\
\leq & \int_0^1 \|q(\cdot,r)\|_4^4dr+\int_1^t
r\|q(\cdot,r)\|_4^4dr+c\\
\leq &
c\Big{(}\int_0^\infty|q_x(0,r)|^2dr\Big{)}^{1/2}\Big{(}\int_0^\infty|q_t(0,r)|^2dr\Big{)}^{1/2}+\int_1^t
r\|q(\cdot,r)\|_4^4dr+c\\
\leq &\int_1^t r\|q(\cdot,r)\|_4^4dr+c,
\end{split}
\end{equation*}
where we used \eqref{unifqx} and the fact that $\beta>1/2$. So,
there exists $c>0$ independent of $t$ such that for any $t\geq 1$
\begin{equation*}
\begin{split}
t^2\|q(\cdot,t)\|_4^4\leq \int_1^t r\|q(\cdot,r)\|_4^4dr+c.
\end{split}
\end{equation*}
Applying  Gronwall's lemma to the above, we obtain \eqref{j2}.
\end{proof}
\begin{remark}\label{remn1}
Similar relations to \eqref{j2} have been proved in
\cite{Bar,CarBu} for the defocusing NLS; in \cite{Bar} for zero
Dirichlet data $q(0,t)$, while in \cite{CarBu} under additional
assumptions.

But here we have provided the first  sufficient polynomial decay conditions
as $t\rightarrow\infty$ involving not only $q(0,t)=Q(t)$ but
$q_t(0,t)=Q_t(t)$ as well, which make  the $L^4$-norm decay true.
This permits us to establish a zero limiting behaviour for the
solution, uniformly in space, as $t\rightarrow\infty$, which is
crucial for proving the main results of this paper, i.e. Theorems 3.8 and 4.4.
\end{remark}
A direct corollary  of the previous theorem is presented in the next
proposition.

\begin{prop}\label{prop1}
Under the assumptions of Theorem \ref{thm2} it holds that the
pointwise limits as $t \to \infty$ are
\begin{equation}\label{j27}
\displaystyle{\lim_{t\rightarrow\infty}}q(x,t)=0,\;\;\;\;
\displaystyle{\lim_{t\rightarrow\infty}}(\partial_x^kq)(x,t)=0,\;\;\;\;
\displaystyle{\lim_{t\rightarrow\infty}}q_t(x,t)=0,\;\;\;\;
\displaystyle{\lim_{t\rightarrow\infty}}(\partial_x^k\partial_t
q)(x,t)=0,
\end{equation}
for any $x>0$, and any integer $k\geq 1$.
\end{prop}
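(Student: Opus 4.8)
The plan is to extract every pointwise limit in \eqref{j27} from the single quantitative input of Theorem \ref{thm2}, namely the $L^4$ decay $\|q(\cdot,t)\|_4^4\le c/t$, by combining it with the uniform bounds of Remark \ref{rem1}, with Gagliardo--Nirenberg interpolation, and with the equation \eqref{nls} itself.

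First I would dispose of the zeroth order statement $\lim_{t\to\infty}q(x,t)=0$. On the half-line, for any $u$ with $u(x)\to 0$ as $x\to\infty$ (which holds for $q(\cdot,t)$ by \eqref{dc}) one has the one dimensional inequality $\|u\|_{L^\infty(0,\infty)}\le c\,\|u\|_{4}^{2/3}\|u_x\|^{1/3}$, obtained by writing $|u(x)|^{3}=-\int_x^\infty\partial_y|u(y)|^{3}\,dy$ and applying H\"older. Inserting the decay \eqref{j2} and the uniform gradient bound \eqref{j***1} gives $\|q(\cdot,t)\|_{L^\infty(0,\infty)}\le c\,t^{-1/6}$, so $q(x,t)\to 0$ as $t\to\infty$, uniformly in $x$ and in particular pointwise. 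This already sharpens \eqref{limq}.

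The spatial derivatives would follow by the same mechanism, interpolating the decaying sup-norm of $q$ against a higher order norm. Concretely, for $m>k$ the Gagliardo--Nirenberg inequality $\|\partial_x^k q\|_{L^\infty}\le c\,\|\partial_x^{m}q\|^{\theta}\,\|q\|_{L^\infty}^{1-\theta}$, with $\theta=k/(m-\tfrac12)\in(0,1)$, would turn the bound $\|q\|_{L^\infty}\le c\,t^{-1/6}$ into $\|\partial_x^k q(\cdot,t)\|_{L^\infty}\to 0$, provided the high order norm $\|\partial_x^{m}q(\cdot,t)\|$ does not grow too fast in $t$ (boundedness, or any polynomial growth of fixed degree, is more than enough, since taking $m$ large makes $\theta$ arbitrarily small). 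Thus the crux is to establish uniform-in-$t$ (or at worst mildly growing) bounds for the higher Sobolev norms $\|q(\cdot,t)\|_{H^{m}}$. I expect this to be the main obstacle: it should be carried out by the energy method, performing $L^2$ estimates on the equations satisfied by $\partial_x^{m}q$ obtained by differentiating \eqref{nls}, in the spirit of the estimates \eqref{mt2}--\eqref{mt3}, the defocusing sign being favourable. The delicate point is the control of the boundary terms produced by the repeated integrations by parts, which must be absorbed using the decay of the Dirichlet data $Q,Q_t$ and the $L^2$-in-time bound \eqref{unifqx} on the Neumann trace $q_x(0,\cdot)$.

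Finally, the temporal limits are a direct consequence of the spatial ones through the equation. Writing \eqref{nls} as $q_t=\mathrm{i}\,q_{xx}-2\mathrm{i}\lambda|q|^{2}q$, the right-hand side tends pointwise to $0$ because $q_{xx}(x,t)\to 0$ (the case $k=2$ above) and $|q|^2q\to0$ (since $q\to0$ uniformly); hence $q_t(x,t)\to0$. Differentiating this identity $k$ times in $x$ gives, by Leibniz, $\partial_x^k q_t=\mathrm{i}\,\partial_x^{k+2}q-2\mathrm{i}\lambda\,\partial_x^{k}(|q|^2q)$, where the first term vanishes in the limit by the previous paragraph and the second is a finite sum of products of derivatives $\partial_x^{j}q$, each of which is bounded and tends to $0$; therefore $\partial_x^k\partial_t q(x,t)\to0$ as well, completing the list in \eqref{j27}.
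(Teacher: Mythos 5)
Your treatment of the zeroth-order limit and of the time derivatives coincides with the paper's. For $q\to0$ the paper observes $\frac{d}{dx}q^3=3q^2q_x$, integrates, and applies H\"older together with the $L^4$ decay \eqref{j2} and the uniform bound \eqref{j***1}, arriving at \eqref{apo1}--\eqref{apo2}; your inequality $\|u\|_{L^\infty}\le c\,\|u\|_4^{2/3}\|u_x\|^{1/3}$ (integrating from $x$ to $\infty$ rather than from $0$, which avoids the boundary term $|q(0,t)|^3$) is the same argument in a marginally cleaner form, and it yields the same uniform decay. Likewise, your derivation of $q_t\to0$ and $\partial_x^k\partial_tq\to0$ from \eqref{nls} via Leibniz is precisely the paper's closing step (``the relations involving the derivative in time follow from the NLS equation, differentiating it in $x$ when needed'').

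The genuine gap is the one you yourself flag: the limits $\partial_x^kq(x,t)\to0$ for $k\ge1$. Your Gagliardo--Nirenberg reduction requires uniform (or at worst slowly growing) bounds on $\|q(\cdot,t)\|_{H^m}$ for large $m$, and these are never established; producing them by energy estimates is not routine, because each differentiation of \eqref{nls} creates boundary traces $q_{xx}(0,t)$, $q_{xxx}(0,t)$, \dots\ that are not controlled by the given Dirichlet data --- exactly the unknown-boundary-value difficulty the whole paper is devoted to. Moreover, under the stated regularity $q\in C^1(L^2)\cap C^0(H^2)$ the norms $\|\partial_x^mq\|$ for $m\ge3$ need not even be finite, so the fixed-degree polynomial-growth assumption you invoke has no foundation. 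You should know, however, that the paper's own proof has the same hole in a starker form: it simply asserts that the $L^4$ decay ``implies the first two relations of \eqref{j27}'' and later uses $(\partial_x^kq)(x,\infty)=0$ without argument. So your proposal matches the paper wherever the paper actually proves something, and is more explicit than the paper about what is missing for the spatial-derivative claims; but as a complete proof of the statement as written, both yours and the paper's fall short at the same point.
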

\begin{proof}
Indeed, we have by Theorem \ref{thm2}
$$\displaystyle{\lim_{t\rightarrow\infty}}\|q(\cdot,t)\|_4\leq
\displaystyle{\lim_{t\rightarrow\infty}}\frac{c}{t^{1/4}}=0,$$
which implies the first two relations of \eqref{j27}.
In fact,
observing that
$$\frac{d}{dx}q^3(x,t)=3q^2(x,t)q_x(x,t),$$
we have
\begin{equation}\label{apo1}
\begin{split}
|q^3(x,t)|=&|q^3(0,t)+\int_0^x3q^2(y,t)q_x(y,t)dy|\\
\leq &|q(0,t)|^3+c\Big{(}\int_0^\infty
|q(y,t)|^4dy\Big{)}^{1/2}\Big{(}\int_0^\infty
|q_x(y,t)|^2dy\Big{)}^{1/2}.
\end{split}
\end{equation}
So, under the assumptions of decaying initial data
$q(0,t)\rightarrow 0$ as $t\rightarrow\infty$, and  by Remark
\ref{rem1}, if $q(0,t)$, $q_t(0,t)$ have polynomial decay of
order $\mathcal{O}(t^{-\alpha})$, $\mathcal{O}(t^{-\beta})$
respectively as $ t \to \infty,$ with $\alpha,\beta>1/2$, we have
(cf. \eqref{j***1})
$$
\|q_x(\cdot,t)\|\leq c,
$$
and thus, \eqref{apo1} yields
\begin{equation}\label{apo2}
\begin{split}
\displaystyle{\lim_{t\rightarrow\infty}}|q^3(x,t)|
 \leq
c\displaystyle{\lim_{t\rightarrow\infty}}\|q(\cdot,t)\|_4^{2}=0.
\end{split}
\end{equation}

The relations involving the derivative in time
follow from the NLS equation \eqref{nls} (differentiating it in $x$
when needed) by using the fact that $q(x,\infty)=0$, and
$(\partial_x^kq)(x,\infty)=0$.

Here, we note that by Remark \ref{rem1}, we have already proved
that $q(x,t)$ is bounded uniformly in $x,\;t>0$.
\end{proof}

Now, we can prove sharper estimates than these of Remark
\ref{rem1} for $q$ and $q_x$ in the $L^2(0,\infty)$ norm in
space. This is achieved by using the fact that the solution decays as
$t\rightarrow\infty$ and the polynomial decay of the Dirichlet data
for large times.
\begin{theorem}\label{thm3}
Under the assumptions of Theorem \ref{thm2} it holds that there
exists $c>0$ independent of $t$ such that for any $t\geq 0$
\begin{equation}\label{j28}
\|q(\cdot,t)\|^2\leq
c\Big{(}\int_t^\infty|q(0,r)|^2dr\Big{)}^{1/2},
\end{equation}
and
\begin{equation}\label{j29}
\|q_x(\cdot,t)\|^2\leq
c\Big{(}\int_t^\infty|q_t(0,r)|^2dr\Big{)}^{1/2}.
\end{equation}
\end{theorem}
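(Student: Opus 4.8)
The plan is to integrate in time the two differential identities already derived for the bulk norms and then read off the tails of the boundary data. Recall from \eqref{j**} that $\frac{d}{dt}\|q(\cdot,t)\|^2=2\,{\rm Im}\{q_x(0,t)\overline{q}(0,t)\}$, and from \eqref{j0} with $\lambda=1$ that $\frac{d}{dt}\big(\|q_x(\cdot,t)\|^2+\|q(\cdot,t)\|_4^4\big)=-2\,{\rm Re}\{q_x(0,t)\overline{q_t}(0,t)\}$. By Cauchy--Schwarz together with \eqref{unifqx} and the hypotheses $\alpha>3/2$, $\beta>5/2$ (so that $q(0,\cdot),q_t(0,\cdot)\in L^2(0,\infty)$), both right-hand sides belong to $L^1(0,\infty)$. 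Hence the limits $\ell_0:=\lim_{t\to\infty}\|q(\cdot,t)\|^2$ and $\ell_1:=\lim_{t\to\infty}\big(\|q_x(\cdot,t)\|^2+\|q(\cdot,t)\|_4^4\big)$ exist, and each identity may be integrated over $[t,\infty)$.

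Granting for the moment that $\ell_0=\ell_1=0$, the two estimates follow quickly. Integrating \eqref{j**} over $[t,\infty)$ gives $\|q(\cdot,t)\|^2=-\int_t^\infty 2\,{\rm Im}\{q_x(0,s)\overline{q}(0,s)\}\,ds$, whence by Cauchy--Schwarz and \eqref{unifqx},
\[
\|q(\cdot,t)\|^2\le 2\Big(\int_t^\infty|q_x(0,s)|^2ds\Big)^{1/2}\Big(\int_t^\infty|q(0,s)|^2ds\Big)^{1/2}\le c\Big(\int_t^\infty|q(0,s)|^2ds\Big)^{1/2},
\]
which is \eqref{j28}. Similarly, integrating the identity for $\|q_x\|^2+\|q\|_4^4$ and discarding the nonnegative term $\|q(\cdot,t)\|_4^4$ (here the defocusing sign $\lambda=1$ is essential) yields
\[
\|q_x(\cdot,t)\|^2\le 2\Big(\int_t^\infty|q_x(0,s)|^2ds\Big)^{1/2}\Big(\int_t^\infty|q_t(0,s)|^2ds\Big)^{1/2}\le c\Big(\int_t^\infty|q_t(0,s)|^2ds\Big)^{1/2},
\]
which is \eqref{j29}. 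Since $\|q(\cdot,t)\|_4^4\le c/t\to0$ by Theorem \ref{thm2}, the vanishing of $\ell_1$ is equivalent to $\|q_x(\cdot,t)\|\to0$.

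The main obstacle is therefore precisely the two limiting statements $\lim_{t\to\infty}\|q(\cdot,t)\|=0$ and $\lim_{t\to\infty}\|q_x(\cdot,t)\|=0$. By Proposition \ref{prop1} we already have the pointwise decay $q(x,t)\to0$, $q_x(x,t)\to0$ for each fixed $x$, and by Remark \ref{rem1} the uniform $H^1$ bounds $\|q(\cdot,t)\|,\|q_x(\cdot,t)\|\le c$; in fact \eqref{apo1} upgrades this to the uniform decay $\|q(\cdot,t)\|_\infty\to0$. The delicate point is to promote this pointwise (and $L^\infty$, $L^4$) decay to convergence of the $L^2$ norms, i.e.\ to rule out $L^2$ mass and $H^1$ energy escaping to spatial infinity along the dispersive front $x\sim t$. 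I would attack this by establishing a uniform smallness of the spatial tails, $\sup_t\int_R^\infty\big(|q(x,t)|^2+|q_x(x,t)|^2\big)\,dx\to0$ as $R\to\infty$, through a weighted energy estimate, and then invoking a Vitali/uniform-integrability argument to conclude $L^2$ convergence from the established pointwise decay. This is the hardest step: the naive weighted bound $\int_0^\infty x^2|q|^2dx\le c\,t^2$ is too weak to control the tail directly, so the argument must exploit the strong polynomial rates $\alpha>3/2$, $\beta>5/2$ of the Dirichlet data (and the zero limiting profile of Theorem \ref{thm2}) to obtain tail bounds that are uniform in $t$.
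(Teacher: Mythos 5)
Your computational skeleton coincides exactly with the paper's proof: integrate \eqref{j**} and \eqref{j0} over $(t,\infty)$, apply Cauchy--Schwarz, bound the factor $\int_t^\infty|q_x(0,s)|^2ds$ by \eqref{unifqx}, and, for \eqref{j29}, discard the nonnegative term $\|q(\cdot,t)\|_4^4$ (the only place where $\lambda=1$ enters). That part is correct, as is your observation that the boundary products are in $L^1(0,\infty)$, so the limits $\ell_0$ and $\ell_1$ exist. The genuine gap is that you never prove $\ell_0=\ell_1=0$: you defer it (``granting for the moment'') and then offer only a program --- uniform spatial-tail smallness via a weighted energy estimate plus a Vitali argument --- with none of its steps carried out. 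This is not a peripheral technicality. Since the right-hand side of \eqref{j28} tends to $0$ as $t\to\infty$, the theorem itself contains the assertion $\|q(\cdot,t)\|\to 0$; what you have left open is therefore the essential analytic content of the statement, and as written your proposal proves strictly less than the theorem.

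For comparison, the paper closes this step in one line: it cites Proposition \ref{prop1} to assert $\|q(\cdot,\infty)\|=0$ and $\|q_x(\cdot,\infty)\|=0$ and then integrates. Your reluctance to make that inference is mathematically well-founded: Proposition \ref{prop1} delivers pointwise decay (upgraded by \eqref{apo1} to uniform decay in $x$), and pointwise or sup-norm decay together with a uniform $H^1$ bound does not in general imply decay of the $L^2$ norm --- the free Schr\"odinger evolution, whose $L^\infty$ and $L^4$ norms decay while its $L^2$ norm is conserved, is the standard counterexample, and it is exactly the ``mass escaping along the dispersive front'' scenario you describe. So you have put your finger on the weakest joint of the paper's own argument; but having flagged it, you were obliged either to carry out the tightness/tail-control argument you sketch, or to proceed as the paper does and accept the inference from Proposition \ref{prop1}. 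Doing neither leaves the proof incomplete at its decisive point.
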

\begin{proof}
Integrating \eqref{j**} in $(t,\infty)$ and using that by
Proposition \ref{prop1} $\|q(\cdot,\infty)\|=0$, we obtain
$$\|q(\cdot,t)\|^2=-2\int_t^\infty{\rm Im}\{q_x(0,r)\bar{q}(0,r)\}dr,$$
which yields for any $t\geq 0$
\begin{equation*}
\begin{split}
\|q(\cdot,t)\|^2\leq&
c\Big{(}\int_0^\infty|q_x(0,r)|^2dr\Big{)}^{1/2}\Big{(}\int_t^\infty|q(0,r)|^2dr\Big{)}^{1/2}\\
\leq &c\Big{(}\int_t^\infty|q(0,r)|^2dr\Big{)}^{1/2},
\end{split}
\end{equation*}
where we used \eqref{unifqx}. Here, $c>0$ is a constant independent of $t$.
Thus, \eqref{j28} holds true.

Furthermore, \eqref{j0} when integrated in $(t,\infty)$ gives
\begin{equation*}
\begin{split}
\|q_x(\cdot,t)\|^2=2\int_t^\infty{\rm Re}\{
q_x(0,r)\bar{q_t}(0,r)\}dr-\|q(\cdot,t)\|_4^4,
\end{split}
\end{equation*}
where we used that by Proposition \ref{prop1}
$\|q(\cdot,\infty)\|=0$ and $\|q_x(\cdot,\infty)\|=0$. Thus, for
any $t\geq 0$ we have
\begin{equation*}
\begin{split}
\|q_x(\cdot,t)\|^2\leq c\Big{(}\int_t^\infty
|q_t(0,r)|^2dr\Big{)}^{1/2},
\end{split}
\end{equation*}
since \eqref{unifqx} is true. Here again, $c>0$ is  a constant independent of $t$.
So, \eqref{j29} follows.
\end{proof}

The next proposition presents an estimate for the Neumann data in
$L^2(t,\infty)$ in terms of the Dirichlet data and its derivative
on $(t,\infty)$. This result, similarly to the previous
theorem connects the aforementioned quantities and is useful when
large times  decay  for $q(0,t)$ and $q_t(0,t)$ is taken into account.
\begin{prop}\label{prop2}
Under the assumptions of Theorem \ref{thm2} it holds that there
exists $c>0$ independent of $t$ such that for any $t\geq 0$
\begin{equation}\label{j30}
\begin{split}
\int_t^\infty |q_x(0,r)|^2dr\leq &
c\Big{(}\int_t^\infty|q(0,r)|^2dr\Big{)}^{1/4}\Big{(}\int_t^\infty|q_t(0,r)|^2dr\Big{)}^{1/4}\\
&+c\Big{(}\int_t^\infty|q(0,r)|^2dr\Big{)}^{1/2}\Big{(}\int_t^\infty|q_t(0,r)|^2dr\Big{)}^{1/2}
+\int_t^\infty|q(0,r)|^4dr.
\end{split}
\end{equation}
\end{prop}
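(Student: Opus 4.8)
The plan is to run the same integral-identity machinery as in Theorem \ref{thm1}, but integrate over the tail interval $(t,\infty)$ rather than $(0,t)$, so that every boundary term at the upper endpoint vanishes by virtue of the decay established in Proposition \ref{prop1}. The starting point is the pointwise-in-time identity \eqref{ja1},
$$|q_x(0,s)|^2={\rm i}\frac{d}{ds}(q,q_x)+{\rm i}q(0,s)\bar{q_t}(0,s)+\lambda|q(0,s)|^4,$$
now with $\lambda=1$. Integrating this over $s\in(t,\infty)$ and using that $\lim_{s\to\infty}(q(\cdot,s),q_x(\cdot,s))=0$ (which follows from $\|q(\cdot,\infty)\|=\|q_x(\cdot,\infty)\|=0$ in Proposition \ref{prop1}), the total-derivative term contributes only the lower endpoint, giving
\begin{equation*}
\int_t^\infty|q_x(0,r)|^2dr\leq \|q(\cdot,t)\|\,\|q_x(\cdot,t)\|+\int_t^\infty|q(0,r)|\,|q_t(0,r)|\,dr+\int_t^\infty|q(0,r)|^4dr.
\end{equation*}

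The second term is handled immediately by Cauchy--Schwarz on $(t,\infty)$, producing the factor $\big(\int_t^\infty|q(0,r)|^2dr\big)^{1/2}\big(\int_t^\infty|q_t(0,r)|^2dr\big)^{1/2}$, which is exactly the second term on the right of \eqref{j30}. The third term already matches the final term of \eqref{j30} verbatim, so no work is needed there. The remaining task is to bound the boundary product $\|q(\cdot,t)\|\,\|q_x(\cdot,t)\|$, and here is where the sharper tail estimates of Theorem \ref{thm3} do all the work: \eqref{j28} gives $\|q(\cdot,t)\|^2\leq c\big(\int_t^\infty|q(0,r)|^2dr\big)^{1/2}$ and \eqref{j29} gives $\|q_x(\cdot,t)\|^2\leq c\big(\int_t^\infty|q_t(0,r)|^2dr\big)^{1/2}$. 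Multiplying and taking square roots yields
$$\|q(\cdot,t)\|\,\|q_x(\cdot,t)\|\leq c\Big(\int_t^\infty|q(0,r)|^2dr\Big)^{1/4}\Big(\int_t^\infty|q_t(0,r)|^2dr\Big)^{1/4},$$
which is precisely the first term of \eqref{j30}. Collecting the three contributions gives the claimed inequality.

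I expect this proposition to be essentially routine given the preceding results; the only point demanding care is the justification that the boundary term at $s=\infty$ in the integration of \eqref{ja1} genuinely vanishes. The delicate quantity is $(q(\cdot,s),q_x(\cdot,s))$, an $L^2$ pairing, so what is actually needed is not mere pointwise-in-$x$ decay but decay of the full spatial norms $\|q(\cdot,s)\|$ and $\|q_x(\cdot,s)\|$ as $s\to\infty$. These follow from \eqref{j28}--\eqref{j29} together with the hypothesis that $\int_t^\infty|q(0,r)|^2dr$ and $\int_t^\infty|q_t(0,r)|^2dr$ tend to $0$ (guaranteed by the polynomial decay rates $\alpha>3/2$, $\beta>5/2$ assumed in Theorem \ref{thm2}), so the pairing indeed vanishes in the limit and the argument closes cleanly.
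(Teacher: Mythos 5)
Your proposal is correct and follows essentially the same route as the paper: integrate the identity \eqref{ja1} over $(t,\infty)$, kill the boundary term at infinity using the decay of the solution, apply Cauchy--Schwarz to the middle term, and control the lower boundary term $\|q(\cdot,t)\|\,\|q_x(\cdot,t)\|$ via \eqref{j28}--\eqref{j29}. In fact your justification of the vanishing of the pairing $(q(\cdot,s),q_x(\cdot,s))$ at $s=\infty$ through the norm estimates \eqref{j28}--\eqref{j29} is slightly more careful than the paper's appeal to pointwise decay alone.
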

\begin{proof}
We integrate \eqref{ja1} in time in $(t,\infty)$ and use that
$q(x,\infty)=0$ for any $x\in (0,\infty)$, to arrive at
\begin{equation*}
\begin{split}
\int_t^\infty|q_x(0,r)|^2dr=&-{\rm
i}(q(\cdot,t),q_x(\cdot,t))+{\rm i}\int_t^\infty
q(0,r)\bar{q_t}(0,r)dr+\int_t^\infty|q(0,r)|^4dr\\
\leq&
c\Big{(}\int_t^\infty|q(0,r)|^2dr\Big{)}^{1/4}\Big{(}\int_t^\infty|q_t(0,r)|^2dr\Big{)}^{1/4}\\
&+c\Big{(}\int_t^\infty|q(0,r)|^2dr\Big{)}^{1/2}\Big{(}\int_t^\infty|q_t(0,r)|^2dr\Big{)}^{1/2}
+\int_t^\infty|q(0,r)|^4dr,
\end{split}
\end{equation*}
where we applied the estimates \eqref{j28} and \eqref{j29}.
\end{proof}

Now we have the following  theorem.

\begin{theorem}\label{thm3}
Under the assumptions of Theorem \ref{thm2}, for
$\alpha>5/2,\;\;\beta>5/2$, and if as $t\rightarrow\infty$
$$q_{tt}(0,t)=\mathcal{O}(t^{-\gamma}),\;\;\;\gamma>1/2,$$
then
\begin{equation}\label{j31}
\int_0^\infty|q_{xt}(0,t)|^2dt<\infty.
\end{equation}
\end{theorem}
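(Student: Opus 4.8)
The plan is to run the energy scheme of Theorem \ref{thm1} on the \emph{time-differentiated} equation. Writing $v:=q_t$ and differentiating \eqref{nls} in $t$ (with $\lambda=1$) gives the linearized problem
$$ {\rm i}v_t+v_{xx}-4|q|^2v-2q^2\bar v=0, $$
an NLS-type equation for $v$ whose coefficients are built from $q$ and $q_x$. Two preliminary observations drive everything. First, the initial datum for $v$ vanishes: since $q_0\equiv 0$ by \eqref{ic0}, \eqref{nls} forces $q_t(x,0)=0$, i.e. $v(\cdot,0)=0$. Second, by Theorem \ref{thm2}, Proposition \ref{prop1} and the sharp bounds \eqref{j28}--\eqref{j29}, with $\alpha,\beta>5/2$ these coefficients decay rapidly; for instance $\|q(\cdot,t)\|\,\|q_x(\cdot,t)\|\leq c\,t^{(1-\alpha-\beta)/2}=\mathcal{O}(t^{-2})$ and, by the Sobolev inequality, $\sup_x|q(x,t)|^2\leq 2\|q(\cdot,t)\|\,\|q_x(\cdot,t)\|$ decays at the same fast rate. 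These decaying coefficients are what make the extra (non-gauge-invariant) terms harmless.

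Next I would reproduce, for $v$, the three multiplier identities of Theorem \ref{thm1}. Multiplying the $v$-equation by $\bar v$ and taking imaginary parts (as for \eqref{j**}) yields
$$ \frac{d}{dt}\|v\|^2=2\,{\rm Im}\{v_x(0,t)\bar v(0,t)\}+4\,{\rm Im}\!\int_0^\infty q^2\bar v^2\,dx, $$
and multiplying by $\bar v_x$ and taking real parts (as for \eqref{ja1}) yields an identity of the form
$$ |v_x(0,t)|^2={\rm i}\frac{d}{dt}(v,v_x)+{\rm i}\,v(0,t)\bar v_t(0,t)+R(t), $$
where $v(0,t)=q_t(0,t)=Q_t(t)$, $v_t(0,t)=q_{tt}(0,t)=Q_{tt}(t)$, and $R(t)$ collects the contributions of $|q|^2v$ and $q^2\bar v$. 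After integrating these by parts in $x$, $R(t)$ splits into space-boundary terms dominated by $|Q(t)|^2|Q_t(t)|^2$ and into bulk terms dominated by $\int_0^\infty|q|\,|q_x|\,|v|^2\,dx\leq c\,\|q(\cdot,t)\|\,\|q_x(\cdot,t)\|\,\|v(\cdot,t)\|\,\|v_x(\cdot,t)\|$, the last step using the Sobolev bound for $\sup_x|v|^2$.

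I would then integrate the $|v_x(0,t)|^2$ identity over $(0,t)$. The endpoint term ${\rm i}(v,v_x)$ vanishes at $s=0$ because $v(\cdot,0)=0$, and at the upper limit it is controlled by $\|v(\cdot,t)\|\,\|v_x(\cdot,t)\|$; the time-boundary integral is bounded by $\|Q_t\|_{L^2(0,\infty)}\|Q_{tt}\|_{L^2(0,\infty)}<\infty$, finite since $\beta>1/2$ and $\gamma>1/2$. The space-boundary part of $R$ is integrable in time because $|Q|^2|Q_t|^2=\mathcal{O}(t^{-2(\alpha+\beta)})$. What remains is to bound the bulk part of $R$ and the endpoint $\|v\|\,\|v_x\|$, which requires uniform-in-$t$ control of $\|v(\cdot,t)\|$ and $\|v_x(\cdot,t)\|$. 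These I obtain exactly in the spirit of Theorem \ref{thm1} and Remark \ref{rem1}: the energy identity above and its $\bar v_t$-analogue give bounds on $\|v\|^2$ and $\|v_x\|^2$ whose right-hand sides carry the coefficients $\|q\|\,\|q_x\|=\mathcal{O}(t^{-2})$ (hence time-integrable), so Gronwall closes the loop, while the term $\int_0^t|v_x(0,s)|^2\,ds$ reappearing on the right is absorbed on the left with a small constant, precisely as $\hat c_0$ was absorbed in \eqref{mt5}. Letting $t\to\infty$ and using Proposition \ref{prop1} (so that $q_t(x,\infty)=\partial_x q_t(x,\infty)=0$ and the endpoint term vanishes) gives $\int_0^\infty|q_{xt}(0,t)|^2\,dt<\infty$, which is \eqref{j31}.

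The main obstacle is the non-self-adjoint term $2q^2\bar v$. Unlike \eqref{nls}, the $v$-equation is not phase invariant, so the analogue of the conservation-type law \eqref{j**} is not clean but carries the extra bulk term $4\,{\rm Im}\int_0^\infty q^2\bar v^2\,dx$, and similarly in the other identities. Controlling these bulk terms rigorously -- so that the bootstrap for $\|q_t\|$, $\|q_{xt}\|$ and $\int|q_{xt}(0,\cdot)|^2$ closes -- is exactly where the fast decay of $q$ from Theorem \ref{thm2} and the sharp $H^1$ bounds \eqref{j28}--\eqref{j29} are needed, and this is why the hypotheses must be strengthened to $\alpha,\beta>5/2$.
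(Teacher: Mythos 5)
Your proposal follows essentially the same route as the paper's proof: differentiate the equation in $t$ to get the system for $v=q_t$, use the multipliers $\bar v$, $\bar v_x$, $\bar v_t$, control $\|v\|$ and $\|v_x\|$ by Gronwall with time-integrable coefficients coming from the sharp decay estimates \eqref{j28}--\eqref{j29} (this is exactly where $\alpha,\beta>5/2$ enters), and close by absorbing the small-constant multiples of $\int_0^t|v_x(0,r)|^2dr$, using $v(\cdot,0)=0$ and $\beta,\gamma>1/2$ at the end. The only cosmetic difference is that you integrate the nonlinear terms in the $\bar v_x$ identity by parts in $x$ (producing boundary terms $|Q|^2|Q_t|^2$), while the paper bounds them directly via $\sup_x|q(x,r)|^2\leq G(r)$; both are equivalent in substance.
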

\begin{proof}
Let $$v:=q_t,$$ then the NLS equation \eqref{nls} gives by taking
the derivative in $t$
\begin{equation}\label{34*n}
{\rm i}v_t+v_{xx}-4|q|^2v-2 q^2\bar{v}=0.
\end{equation}
 We
multiply the above with $\bar{v_x}$ and integrate in space, to
obtain after taking real parts
\begin{equation*}
\begin{split}
&\frac{{\rm i}}{2}\frac{d}{dt}(v(\cdot,t),v_x(\cdot,t))+\frac{\rm
i}{2}v(0,t)\bar{v_t}(0,t)-\frac{1}{2}|v_x(0,t)|^2\\
&-4{\rm Re}(|q(\cdot,t)|^2v(\cdot,t),v_x(\cdot,t))-2
{\rm Re}(q^2(\cdot,t)\bar{v}(\cdot,t),v_x(\cdot,t))=0.
\end{split}
\end{equation*}
We integrate the above in time in $(0,t)$ and arrive at
\begin{equation}\label{j322}
\begin{split}
\int_0^t|v_x(0,r)|^2dr=&{\rm i}(v(\cdot,r),v_x(\cdot,r))+{\rm
i}\int_0^tv(0,r)\bar{v_r}(0,r)dr\\
&-8\int_0^t{\rm
Re}(|q(\cdot,r)|^2v(\cdot,r),v_x(\cdot,r))dr- 4\int_0^t{\rm
Re}(q^2(\cdot,r)\bar{v}(\cdot,r),v_x(\cdot,r))dr.
\end{split}
\end{equation}
But it holds that for any $x$
\begin{equation*}
\begin{split}
|q(x,r)|^2\leq &\|q(\cdot,r)\|\|q_x(\cdot,r)\|\\
\leq &
c\Big{(}\int_r^\infty|q(0,s)|^2ds\Big{)}^{1/4}\Big{(}\int_r^\infty|q_s(0,s)|^2ds\Big{)}^{1/4}=:G(r),
\end{split}
\end{equation*}
where we used \eqref{j28} and \eqref{j29}. So, we get
\begin{equation}\label{j33}
\displaystyle{\max_{x\in(0,\infty)}}|q(x,r)|^2\leq G(r).
\end{equation}
Using \eqref{j33} in \eqref{j322}, we have
\begin{equation}\label{j34}
\begin{split}
\int_0^t|v_x(0,r)|^2dr=&\int_0^t|q_{tx}(0,r)|^2dr\\
\leq &c\|v(\cdot,t)\|\|v_x(\cdot,t)\|+c\int_0^t|v(0,r)||v_r(0,r)|dr\\
&+c\int_0^tG(r)\|v(\cdot,r)\|\|v_x(\cdot,r)\|dr\\
\leq
&c\|v(\cdot,t)\|^2+\hat{c_0}\|v_x(\cdot,t)\|^2+c\int_0^t|v(0,r)||v_r(0,r)|dr\\
&+c\displaystyle{\max_{r\geq
0}}[\|v(\cdot,r)\|^2+\hat{c_1}\|v_x(\cdot,r)\|^2]\int_0^tG(r)dr,
\end{split}
\end{equation}
where  $\hat{c_0}$ and $\hat{c_1}$ can be taken as small as we want (we will specify how small later).

We shall estimate $\|v(\cdot,r)\|$, $\|v_x(\cdot,r)\|$. We
multiply \eqref{34*n} by $\bar{v}$ and integrate in space to
obtain
\begin{equation}\label{j35}
\begin{split}
\frac{d}{dt}\|v(\cdot,r)\|^2=&2{\rm
Im}\{v_x(0,r)\bar{v}(0,r)\}+4{\rm
Im}(q^2(\cdot,r)\bar{v}(\cdot,r),v(\cdot,r))\\
\leq &c|v_x(0,r)||v(0,r)|+cG(r)\|v(\cdot,r)\|^2.
\end{split}
\end{equation}
But we observe that for $t\leq a_1$, $a_1$ large, then
$$\int_0^tG(r)dr\leq\int_0^{a_1}G(r)dr\leq a_1\displaystyle{\max_{r\in(0,a_1)}}G(r)=a_1G(0)\leq c,$$
while for any $t>a_1$
\begin{equation*}
\begin{split}
\int_0^tG(r)dr=\int_0^{a_1}G(r)dr+\int_{a_1}^tG(r)dr\leq &
c+c\int_{a_1}^t\Big{[}\Big{(}\int_r^\infty|q(0,s)|^2ds\Big{)}^{1/4}
\Big{(}\int_r^\infty|q_s(0,s)|^2ds\Big{)}^{1/4}\Big{]}dr\\
\leq &c+ct^{\frac{-\alpha-\beta+3}{2}}\leq c,
\end{split}
\end{equation*}
where we used that $\alpha+\beta>3$. Here, we used also that for
$t>a_1$ then $q(0,t)\sim \mathcal{O}(t^{-\alpha})$ and
$q_t(0,t)\sim \mathcal{O}(t^{-\beta})$. Thus, for any $t\geq 0$
we obtain $$e^{\int_0^tG(r)dr}\leq c,$$ uniformly in $t$, and as
a result, Gronwall's lemma, applied to \eqref{j35}, gives
\begin{equation}\label{j36}
\begin{split}
\|v(\cdot,t)\|^2 \leq c\int_0^t|v_x(0,r)||v(0,r)|dr.
\end{split}
\end{equation}
since $v(x,0)=0$. So, we have for $c_0>0$ as small we want
\begin{equation}\label{j36*}
\begin{split}
\displaystyle{\max_{t\in[0,\infty)}}\|v(\cdot,t)\|^2 \leq
c_0\int_0^\infty|v_x(0,r)|^2dr+c\int_0^\infty|q_r(0,r)|^2dr.
\end{split}
\end{equation}
In addition, \eqref{34*n} multiplied by $\bar{v_t}$ and integrated
in space gives
\begin{equation}\label{j37}
\begin{split}
&-\frac{1}{2}\frac{d}{dt}\|v_x(\cdot,r)\|^2-{\rm Re}\{v_x(0,r)\bar{v_r}(0,r)\}\\
&-4{\rm
Re}(|q(\cdot,r)|^2v(\cdot,r),v_t(\cdot,r))-2{\rm
Re}(q^2(\cdot,r)\bar{v}(\cdot,r),v_t(\cdot,r))=0.
\end{split}
\end{equation}
We use now, cf. \cite{FIS}, that
$$2{\rm Re}(q^2\bar{v},v_t)=\frac{d}{dt}{\rm
Re}(q^2,v^2)+\mathcal{A},\;\;\;\;4{\rm
Re}(|q|^2v,v_t)=2\frac{d}{dt}{\rm Re}(|q|^2,|v|^2)+\mathcal{B},$$
for $$|\mathcal{A}|+|\mathcal{B}|\leq
c\int_0^\infty|q||q_t||v|^2dx=c\int_0^\infty|q||v|^3dx.$$ So, we
get
\begin{equation}\label{j38}
\begin{split}
|\mathcal{A}+\mathcal{B}|\leq &c\int_0^\infty|q||v||v|^2dx\leq
c\int_0^\infty|q||v|(\|v\|\|v_x\|)dx\leq c\|q\|\|v\|^2\|v_x\|\\
\leq &c\|q\|\|v\|^4+c\|q\|\|v_x\|^2.
\end{split}
\end{equation}
Hence, \eqref{j37} by using \eqref{j38}, yields
\begin{equation}\label{j39}
\begin{split}
&\frac{d}{dt}\|v_x(\cdot,r)\|^2+2\frac{d}{dt}{\rm
Re}(q^2(\cdot,r),v^2(\cdot,r))+4\frac{d}{dt}{\rm
Re}(|q(\cdot,r)|^2,|v(\cdot,r)|^2)\\
\leq &
c|v_x(0,r)||v_r(0,r)|+c\|q(\cdot,r)\|\|v(\cdot,r)\|^4+c\|q(\cdot,r)\|\|v_x(\cdot,r)\|^2
\end{split}
\end{equation}
Relation \eqref{j28} yields
$$\int_0^t\|q(\cdot,r)\|dr\leq \int_0^{a_1}\|q(\cdot,r)\|dr\leq
c,$$ for $t\leq a_1$, while for $t>a_1$
\begin{equation*}
\begin{split}
\int_0^t\|q(\cdot,r)\|dr\leq &
c\int_0^t\Big{(}\int_s^\infty|q(0,r)|^2dr\Big{)}^{1/4}ds \leq
c+c\int_{a_1}^t\Big{(}\int_s^\infty|q(0,r)|^2dr\Big{)}^{1/4}ds\\
\leq& c+ct^{\frac{-2\alpha+1}{4}+1}\leq c,
\end{split}
\end{equation*}
since $\alpha>5/2$. Thus, for any $t\geq 0$, Gromwall's lemma in
\eqref{j39} gives (since $v_x(x,0)=q_{tx}(x,0)=0$ and $v(x,0)=0$)
\begin{equation}\label{j40}
\begin{split}
\|v_x(\cdot,t)\|^2\leq&
c\|q^2(\cdot,t)\|\|v^2(\cdot,t)\|+c\int_0^t\Big{[}\int_0^\infty |q(x,r)|^2|v(x,r)|^2dx\Big{]}dr\\
&+c\int_0^t|v_x(0,r)||v_r(0,r)|dr+\int_0^tG_1(r)\|v(\cdot,r)\|^4dr\\
\leq
&c\|q^2(\cdot,t)\|\|v^2(\cdot,t)\|+c\int_0^t\displaystyle{\max_{x\geq
0}}(|q(x,r)|^2)\|v(\cdot,r)\|^2dr\\
&+c\int_0^t|v_x(0,r)||v_r(0,r)|dr+c\int_0^tG_1(r)\|v(\cdot,r)\|^4dr\\
\leq
&c\|q^2(\cdot,t)\|\|v^2(\cdot,t)\|+c\int_0^t\displaystyle{\max_{x\geq
0}}(|q(x,r)|^2)1dr+c\int_0^t\displaystyle{\max_{x\geq
0}}(|q(x,r)|^2)\|v(\cdot,r)\|^4dr\\
&+c\int_0^t|v_x(0,r)||v_r(0,r)|dr+c\int_0^tG_1(r)\|v(\cdot,r)\|^4dr\\
\leq
&c\|q^2(\cdot,t)\|\|v^2(\cdot,t)\|+c\\
&+c\int_0^t|v_x(0,r)||v_r(0,r)|dr+c\int_0^t[G(r)+G_1(r)]\|v(\cdot,r)\|^4dr
\end{split}
\end{equation}
since $$\|q(\cdot,s)\|\leq
c\Big{(}\int_s^\infty|q(0,r)|^2dr\Big{)}^{1/4}=:G_1(s),$$ while as
before, for the same definition of $G$ (see \eqref{j33}),
\begin{equation*}
\begin{split}
|q(x,r)|^2\leq &\|q(\cdot,r)\|\|q_x(\cdot,r)\|\\
\leq &
c\Big{(}\int_r^\infty|q(0,s)|^2ds\Big{)}^{1/4}\Big{(}\int_r^\infty|q_s(0,s)|^2ds\Big{)}^{1/4}=:G(r),
\end{split}
\end{equation*}
and thus, $$\int_0^t\displaystyle{\max_{x\geq
0}}(|q(x,r)|^2)dr\leq\int_0^tG(r)dr\leq c.$$

Furthermore, we have
\begin{equation}\label{j41}
\begin{split}
\|v^2\|\leq c\|v\|^{3/2}\|v_x\|^{1/2}\leq c_1\|v_x\|+c\|v\|^{3}
\leq c_1\|v_x\|^2+c\|v\|^4+c,
\end{split}
\end{equation} where
 $c_1>0$ can be chosen as small as we want. Also, the same argument together
with \eqref{j28}, \eqref{j29}, gives
\begin{equation}\label{j42}
\begin{split}
\|q^2(\cdot,r)\|\leq c\|q_x(\cdot,r)\|^2+c\|q(\cdot,r)\|^4+c\leq
c,
\end{split}
\end{equation}
uniformly for any $r$. So, using \eqref{j41} and \eqref{j42} in
\eqref{j40}, we obtain
\begin{equation}\label{j43}
\begin{split}
\|v_x(\cdot,t)\|^2 \leq
&c(c_1\|v_x(\cdot,t)\|^2+c\|v(\cdot,t)\|^4+c)+c+c\int_0^t[G(r)+G_1(r)]\|v(\cdot,r)\|^4dr\\
&+c\int_0^t|v_x(0,r)|^2dr+c\int_0^t|v_r(0,r)|^2dr\\
\leq
&cc_1\|v_x(\cdot,t)\|^2+c\|v(\cdot,t)\|^4+c\int_0^t[G(r)+G_1(r)]\|v(\cdot,r)\|^4dr\\
&+c\int_0^t|v_x(0,r)|^2dr+c\int_0^t|v_r(0,r)|^2dr+c.
\end{split}
\end{equation}
Furthermore, by \eqref{j36} we arrive at
\begin{equation*}
\begin{split}
\|v(\cdot,t)\|^4 \leq &
c\Big{(}\int_0^t|v_x(0,r)||v(0,r)|dr\Big{)}^2\leq
c\Big{(}\Big{(}\int_0^t|v_x(0,r)|^2dr\Big{)}^{1/2}\Big{(}\int_0^t|v(0,r)|^2dr\Big{)}^{1/2}\Big{)}^2\\
\leq
&c\Big{(}\int_0^t|v_x(0,r)|^2dr\Big{)}\Big{(}\int_0^t|v(0,r)|^2dr\Big{)}.
\end{split}
\end{equation*}
Thus, we have
\begin{equation}\label{j44}
\begin{split}
\|v(\cdot,t)\|^4 \leq &c\int_0^t|v_x(0,r)|^2dr,
\end{split}
\end{equation}
while
\begin{equation}\label{j45}
\begin{split}
\int_0^t[G(r)+G_1(r)]\|v(\cdot,r)\|^4 dr\leq
&\displaystyle{\max_{t\geq 0}}\|v(\cdot,r)\|^4
\int_0^t[G(r)+G_1(r)]dr\\
\leq &c\int_0^\infty|v_x(0,r)|^2dr\int_0^t[G(r)+G_1(r)]dr\leq
c\int_0^\infty|v_x(0,r)|^2dr,
\end{split}
\end{equation}
since $\alpha+\beta>3$ and $\alpha>5/2$ (note that
$$\int_0^tG_1(r)dr\leq c+c\int_{a_1}^t
r^{\frac{-2\alpha+1}{4}}dr\leq c$$ for $\alpha>5/2$). Therefore,
using \eqref{j44} and \eqref{j45} in \eqref{j43}, we obtain
\begin{equation*}
\|v_x(\cdot,t)\|^2 \leq
cc_1\|v_x(\cdot,t)\|^2+c\int_0^\infty|v_x(0,r)|^2dr+c\int_0^t|v_r(0,r)|^2dr+c,
\end{equation*}
and so,  if $c_1$ is chosen small enough,
\begin{equation}\label{j46}
\|v_x(\cdot,t)\|^2 \leq
c\int_0^\infty|v_x(0,r)|^2dr+c\int_0^t|v_r(0,r)|^2dr+c,
\end{equation}

Observe now that since $\int_0^\infty G(r)dr<\infty$, then
\eqref{j34} gives
\begin{equation*}
\begin{split}
\int_0^t|v_x(0,r)|^2dr \leq
&c\|v(\cdot,r)\|^2+\hat{c_0}\|v_x(\cdot,r)\|^2+c\int_0^t|v(0,r)||v_r(0,r)|dr\\
&+c\displaystyle{\max_{r\geq
0}}[\|v(\cdot,r)\|^2+\hat{c_1}\|v_x(\cdot,r)\|^2].
\end{split}
\end{equation*}
and thus, relations \eqref{j36*} and \eqref{j46} yield
\begin{equation*}
\begin{split}
\int_0^t|v_x(0,r)|^2dr \leq
&c(c_0\int_0^\infty|v_x(0,r)|^2dr+c\int_0^\infty|q_r(0,r)|^2dr)\\
&+c(\hat{c_0}\int_0^\infty|v_x(0,r)|^2dr+\hat{c_1}\int_0^\infty|v_x(0,r)|^2dr+c\int_0^\infty|v_r(0,r)|^2dr+c)\\
&+c\int_0^\infty|v(0,r)||v_r(0,r)|dr.
\end{split}
\end{equation*}
Thus, if $c_0$, $c_2$ and $\hat{c_0}$, $\hat{c_1}$ are chosen appropriately small, we arrive at
\begin{equation*}
\begin{split}
\int_0^\infty|v_x(0,r)|^2dr \leq
c\int_0^\infty|q_r(0,r)|^2dr+c\int_0^\infty|q_{rr}(0,r)|^2dr+c\leq
c,
\end{split}
\end{equation*}
since $\beta>1/2$ and $\gamma>1/2$.
\end{proof}
\begin{remark}\label{remn2}
For the previous proof, the crucial argument was the application
of Gronwall's lemma for $t\in \mathbb{R}^+$. This could in general give
exponentially growing coefficients in time; in our case
the proven decay of solution for large times leads to the validity
of the estimates \eqref{j28} and \eqref{j29} which results to
uniformly bounded coefficients.
\end{remark}

Next we prove the following general theorem that connects the
polynomial decay of Dirichlet data to an analogous resulting decay
of the Neumann one.
\begin{theorem}\label{thm4}
Let $q$ be the
unique global classical solution $q\in C^1(L^2)\cap C^0(H^2)$ of
the problem \eqref{nls}-\eqref{ic}, with  $Q\in
C^2$ and $Q(0)=0$, with
$\lambda=1$, under the assumption
\eqref{ic0}. If as $t\rightarrow \infty$
$$q(0,t)=\mathcal{O}(t^{-\alpha}),\;\;\;\;q_t(0,t)=\mathcal{O}(t^{-\beta}),
\;\;\;\;q_{tt}(0,t)=\mathcal{O}(t^{-\gamma})$$ where
$$\alpha>5/2,\;\;\;\;\beta>5/2,\;\;\;\;\gamma>1/2,$$
then there exists $c>0$ independent of $t$ such that for any $t$
 large
\begin{equation}\label{j47}
|q_{x}(0,t)|\leq c t^{-\delta},
\end{equation}
for
$\delta=\min\Big{\{}\frac{\alpha+\beta-1}{2},\;\;\frac{4\alpha-1}{4}\Big{\}}.$
\end{theorem}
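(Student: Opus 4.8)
The plan is to pass from the two global-in-time $L^2$ controls already established to a pointwise-in-time decay statement, via a one-dimensional Sobolev (fundamental-theorem-of-calculus) argument in the variable $t$. Write $P(t):=q_x(0,t)$. The hypotheses feed in through the two integrability facts $P\in L^2(0,\infty)$, which is \eqref{unifqx}, and $P'=q_{xt}(0,\cdot)\in L^2(0,\infty)$, which is \eqref{j31}. Together these say $P\in H^1(0,\infty)$, so in particular $P(t)\to 0$ as $t\to\infty$; this limiting behaviour is the structural input that lets me integrate from $t$ to $+\infty$ and bring in the \emph{tail} $\mathcal A(t)=\int_t^\infty|P|^2$ rather than only the finite quantity $\mathcal A(0)$.

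Concretely, I would first record the identity, legitimate precisely because $P(t)\to0$,
\begin{equation*}
|P(t)|^2=-\int_t^\infty\frac{d}{dr}|P(r)|^2\,dr=-2\int_t^\infty\mathrm{Re}\{P(r)\overline{P'(r)}\}\,dr,
\end{equation*}
and then estimate the right-hand side by Cauchy--Schwarz, using $\int_t^\infty|q_{xt}(0,r)|^2\,dr\le\mathcal B<\infty$ from \eqref{j31}, to get
\begin{equation*}
|q_x(0,t)|^2\le 2\Big(\int_t^\infty|q_x(0,r)|^2\,dr\Big)^{1/2}\Big(\int_t^\infty|q_{xt}(0,r)|^2\,dr\Big)^{1/2}\le 2\,\mathcal A(t)^{1/2}\,\mathcal B^{1/2}.
\end{equation*}
In this way the pointwise size of $q_x(0,t)$ is governed entirely by the decay of the tail $\mathcal A(t)$, and the whole problem is reduced to inserting the tail estimate of Proposition \ref{prop2}.

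To finish I would substitute \eqref{j30} together with the polynomial-decay assumptions. Since $q(0,t)=\mathcal O(t^{-\alpha})$ and $q_t(0,t)=\mathcal O(t^{-\beta})$ give $\int_t^\infty|q(0,r)|^2\,dr=\mathcal O(t^{-(2\alpha-1)})$, $\int_t^\infty|q_t(0,r)|^2\,dr=\mathcal O(t^{-(2\beta-1)})$ and $\int_t^\infty|q(0,r)|^4\,dr=\mathcal O(t^{-(4\alpha-1)})$, the three summands of \eqref{j30} produce two genuinely competing polynomial rates: one coming from the cross term $(q(\cdot,t),q_x(\cdot,t))$ (controlled by the sharp space-norm estimates \eqref{j28}--\eqref{j29}) and one coming from the quartic boundary term $\int_t^\infty|q(0,r)|^4\,dr$. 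Balancing the resulting powers of $t$ then yields $|q_x(0,t)|\le c\,t^{-\delta}$ with $\delta$ equal to the slower (dominant) of the two contributions, which is exactly $\delta=\min\{\tfrac{\alpha+\beta-1}{2},\tfrac{4\alpha-1}{4}\}$. The role of the standing hypotheses $\alpha,\beta>5/2$ and $\gamma>1/2$ is to guarantee convergence of all the tail integrals and the finiteness $\mathcal B<\infty$; in particular $\gamma$ enters only through the existence of $\mathcal B$ and not through the final rate, which is why $\delta$ is $\gamma$-free.

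The main obstacle I expect is not the Cauchy--Schwarz step but the bookkeeping that pins down the sharp exponent: one must check carefully which of the three terms in \eqref{j30} dominates $\mathcal A(t)$ in each parameter regime (the cross term generically, the quartic term taking over once $\beta$ is large relative to $\alpha$), and confirm that it is the \emph{decaying} tail $\mathcal A(t)$, not merely the finite $\mathcal A(0)$, that drives the pointwise bound. A secondary but essential point is the justification that $q_x(0,t)\to0$: this is where \eqref{unifqx} and \eqref{j31} must be used in tandem, and hence where the large-time decay of the solution from Theorem \ref{thm2} (and the attendant sharp estimates it unlocks) is implicitly required.
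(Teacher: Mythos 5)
Your argument is essentially the paper's own proof of Theorem \ref{thm4}. The fundamental-theorem-of-calculus identity plus Cauchy--Schwarz, $|P(t)|^2\le 2\big(\int_t^\infty|P|^2dr\big)^{1/2}\big(\int_t^\infty|P'|^2dr\big)^{1/2}$, is exactly the interpolation inequality with which the paper opens its proof (there written as $|q_x(0,t)|\le c\big(\int_s^\infty|q_x(0,r)|^2dr\big)^{1/4}\big(\int_s^\infty|q_{xr}(0,r)|^2dr\big)^{1/4}$), the finiteness of $\mathcal B=\int_0^\infty|q_{xt}(0,r)|^2dr$ is drawn from Theorem 3.6 exactly as in your sketch, and the tail $\mathcal A(t)$ is then estimated via Proposition \ref{prop2} and the polynomial decay of the Dirichlet data, again just as you describe. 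However, the step you defer as ``bookkeeping'' is precisely where the stated rate is at stake, and it is a genuine issue: your bound gives $|q_x(0,t)|\le c\,\mathcal A(t)^{1/4}$, and the first term of Proposition \ref{prop2} decays like $t^{-(\alpha+\beta-1)/2}$, so after the fourth root its contribution is $t^{-(\alpha+\beta-1)/8}$, not $t^{-(\alpha+\beta-1)/2}$; carried out uniformly, your scheme yields $\delta=\min\big\{\tfrac{\alpha+\beta-1}{8},\,\tfrac{4\alpha-1}{4}\big\}$. The branch $\tfrac{\alpha+\beta-1}{2}$ in the statement arises because the paper's displayed chain of inequalities applies the outer $1/4$ power to the quartic term but not to the first two terms of the tail estimate; this inconsistency is therefore inherited from the paper's own computation rather than introduced by you, but you should be aware that the route you propose (and the paper follows) does not, as written, deliver the exponent claimed in the statement.
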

\begin{proof}

Consider $t\geq 0$ large and $s$ large such that $s<t$, then by
using first Theorem \ref{thm3} and then Proposition \ref{prop2},
we have
\begin{equation}\label{j48}
\begin{split}
|q_x(0,t)|\leq &
c\Big{(}\int_s^\infty|q_x(0,r)|^2dr\Big{)}^{1/4}\Big{(}\int_s^\infty|q_{xr}(0,r)|^2dr\Big{)}^{1/4}\\
\leq&c\Big{(}\int_s^\infty|q_x(0,r)|^2dr\Big{)}^{1/4}\Big{(}\int_0^\infty|q_{xr}(0,r)|^2dr\Big{)}^{1/4}\\
\leq &c \Big{(}\int_s^\infty|q_x(0,r)|^2dr\Big{)}^{1/4}\\
\leq & c\Big{[}\Big{(}\int_s^\infty|q(0,r)|^2dr\Big{)}^{1/4}\Big{(}\int_s^\infty|q_t(0,r)|^2dr\Big{)}^{1/4}\\
&+\Big{(}\int_s^\infty|q(0,r)|^2dr\Big{)}^{1/2}\Big{(}\int_s^\infty|q_t(0,r)|^2dr\Big{)}^{1/2}
+\int_s^\infty|q(0,r)|^4dr\Big{]}^{1/4}\\
\leq & c[s^{\frac{-2\alpha+1}{4}}s^{\frac{-2\beta+1}{4}}
+s^{\frac{-2\alpha+1}{2}}s^{\frac{-2\beta+1}{2}}
+s^{\frac{-4\alpha+1}{4}}]\\
\leq&[s^{\frac{-\alpha-\beta+1}{2}} +s^{-\alpha-\beta+1}
+s^{\frac{-4\alpha+1}{4}}]\\
\leq
&[s^{\frac{-\alpha-\beta+1}{2}}+s^{\frac{-4\alpha+1}{4}}]\leq c
s^{-\delta},
\end{split}
\end{equation}
for
$$\delta=\min\Big{\{}\frac{\alpha+\beta-1}{2},\;\;\frac{4\alpha-1}{4}\Big{\}}.$$
So, we obtain
\begin{equation*}
\displaystyle{\lim_{t\rightarrow
s^{-}}}|q_{x}(0,t)|=|q_x(0,s)|\leq c s^{-\delta},
\end{equation*}
which gives the result.
\end{proof}

In particular,  if the Dirichlet data $Q$ belong in the Schwartz class, then the
Neumann values  $q_x(0,t)$ also belong in the Schwartz class.

An immediate corollary  is the following.
\begin{theorem}\label{thm5}
Let $q$ be the
unique global classical solution $q\in C^1(L^2)\cap C^0(H^2)$ of
the problem \eqref{nls}-\eqref{ic}, with  $Q\in
C^2$ and $Q(0)=0$, with
$\lambda=1$, under the assumption
\eqref{ic0}. If as $t\rightarrow\infty$
$$q(0,t)=\mathcal{O}(t^{-5/2-\eps}),\;\;\;\;q_t(0,t)=\mathcal{O}(t^{-5/2-\eps}),
\;\;\;\;q_{tt}(0,t)=\mathcal{O}(t^{-1/2-\eps}),$$ for some small
$\eps>0$, then there exists $c>0$ independent of $t$ such that for
any $t$
 large
\begin{equation}\label{j48}
|q_{x}(0,t)|\leq c t^{-2-\eps}.
\end{equation}
In particular
\begin{equation}\label{j49}
\int_0^\infty|q_{x}(0,t)|dt<\infty.
\end{equation}
\end{theorem}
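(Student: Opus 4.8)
The plan is simply to specialize the preceding Theorem~\ref{thm4} to the exponents at hand, so the bulk of the work has already been done. First I would observe that the hypotheses of the present statement are exactly the case $\alpha=\beta=5/2+\eps$ and $\gamma=1/2+\eps$ of Theorem~\ref{thm4}, and that these values satisfy the required strict inequalities $\alpha>5/2$, $\beta>5/2$, $\gamma>1/2$ for every $\eps>0$. Consequently Theorem~\ref{thm4} applies and produces, for $t$ large, the pointwise bound $|q_x(0,t)|\leq c\,t^{-\delta}$ with $\delta=\min\{(\alpha+\beta-1)/2,\,(4\alpha-1)/4\}$.

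The next step is to compute $\delta$ for this choice of exponents. Substituting $\alpha=\beta=5/2+\eps$ gives
$$\frac{\alpha+\beta-1}{2}=2+\eps,\qquad \frac{4\alpha-1}{4}=\frac{9}{4}+\eps,$$
and since $2+\eps\leq \tfrac{9}{4}+\eps$ the minimum is attained by the first quantity, i.e.\ $\delta=2+\eps$. This is precisely the asserted decay estimate $|q_x(0,t)|\leq c\,t^{-2-\eps}$ for $t$ large.

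Finally, to deduce the integrability statement \eqref{j49} I would split $\int_0^\infty|q_x(0,t)|\,dt$ into the contributions over $[0,T]$ and $[T,\infty)$ for some fixed $T$ beyond which the decay bound holds. On $[0,T]$ the Neumann datum $q_x(0,\cdot)$ is continuous, because $q$ is a classical solution, and therefore bounded, so that part of the integral is finite. On $[T,\infty)$ the estimate $|q_x(0,t)|\leq c\,t^{-2-\eps}$ yields $\int_T^\infty|q_x(0,t)|\,dt\leq c\int_T^\infty t^{-2-\eps}\,dt<\infty$, the exponent $2+\eps$ being strictly larger than $1$. Summing the two contributions gives \eqref{j49}.

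I do not anticipate any genuine obstacle: the result is a transparent corollary of Theorem~\ref{thm4}, and the only thing worth checking is that the exponent $\delta=2+\eps$ exceeds $1$, which secures integrability at infinity, while integrability near $t=0$ is handled by the continuity of the classical Neumann data. The mild gain over the borderline rate $t^{-1}$ is exactly what the smallness parameter $\eps>0$ provides.
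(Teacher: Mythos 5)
Your proposal is correct and is exactly the paper's intended argument: the paper states this theorem as an immediate corollary of Theorem~\ref{thm4}, obtained by taking $\alpha=\beta=5/2+\eps$, $\gamma=1/2+\eps$, computing $\delta=\min\{2+\eps,\,9/4+\eps\}=2+\eps$, and then integrating the resulting decay bound (with the integral near $t=0$ finite since the Neumann data of the classical solution is continuous there). Your calculation of $\delta$ and the splitting argument for \eqref{j49} are both accurate, so there is nothing to add.
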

As a result, the assumptions of the unified method of scattering and inverse scattering of Fokas et al.
are now rigorously justified for the model case of defocusing NLS.

\begin{remark}
Unlike KdV and the associated Schr\"odinger operator,
scattering and inverse scattering for the NLS and the Dirac operator
has not been so thoroughly investigated in the literature.
In particular, the question of the optimal data for the
inverse scattering method to work has never been fully considered.

Our Theorems 3.9 (and 4.5 in the next section)  are motivated by the analysis in  \cite{BFS}
which implies that  an $L^1$ assumption  for  $Q, Q_t$ and $q_x(0,t)$ is enough 
for the inverse scattering method to solve the initial value problem.  
Indeed, in  \cite{BFS} an explicit formula for the eigenfunction for the second Lax operator (the t-problem)
is provided. ( \cite{BFS} does not consider infinite times, but a similar formula is possible
for our purposes.) This formula involves the solution of  a Goursat type problem for a system of PDEs.
An inspection of the formula and the system reveals \cite{S} that indeed if
$Q(t), Q_t(t)$ and $q_x(0,t)$ are  $L^1(0, \infty)$ the eigenfunction for the second Lax operator and hence the
scattering construction and the resulting Riemann-Hilbert problem are possible.
 
Of course, our Theorems  3.8 (and 4.4) are flexible enough to provide  a sufficient (and reasonably weak) condition
on the Dirichlet data that guarantees any required decay condition on the Neumann data.

An alternative approach appears in the Appendix. It is more general, but it  only bounds the
$L^1$-norm  for $q_x(0,t)$ (and for $tq_x(0,t)$).

\end{remark}

\section{Focusing NLS}\label{foc}
In this section we have to make the assumption that $q(x,t) \to 0$ as $t \to \infty$ pointwise.
While this follows easily in the case of defocusing NLS from the 4-norm estimate, it is not clear how to prove it
in the case of focusing NLS.

The following proposition holds true.
\begin{prop}\label{prop1l}
Let $q$ be the
unique global classical solution $q\in C^1(L^2)\cap C^0(H^2)$ of
the problem \eqref{nls}-\eqref{ic}, with  $Q\in
C^2$ and $Q(0)=0$, with
$\lambda=-1$, under the assumption
 \eqref{ic0}. Also let
$$\int_0^\infty|q(0,t)|^2dt,$$ be sufficiently small. Then it holds
that
\begin{equation}\label{nnn}
\|q(\cdot,t)\|^2\leq
c\Big{(}\int_t^\infty|q(0,r)|^2dr\Big{)}^{1/2},
\end{equation}
and
\begin{equation}\label{j29l}
\|q_x(\cdot,t)\|^2\leq
c\Big{(}\int_t^\infty|q_t(0,r)|^2dr\Big{)}^{1/2}+c\Big{(}\int_t^\infty|q(0,r)|^2dr\Big{)}^{3/2}.
\end{equation}
\end{prop}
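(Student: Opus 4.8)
The plan is to mirror the defocusing argument of Theorem~\ref{thm3}, the two genuinely new features being the wrong sign of the quartic term (now $\lambda=-1$) and the fact that the space-time decay of $q$ is no longer a theorem but a standing hypothesis. Both differential identities \eqref{j**} and \eqref{j0} were derived for arbitrary real $\lambda$, so they remain available. I first record the role of the smallness hypothesis: the assumed smallness of $\int_0^\infty|q(0,t)|^2\,dt$ is exactly the extra condition required by the $\lambda<0$ branch of Theorem~\ref{thm1}, so \eqref{unifqx} holds, and consequently (through \eqref{mt2}--\eqref{mt3} and the Sobolev inequality, exactly as in Remark~\ref{rem1}, where the $L^4$-term is absorbed) the uniform bounds $\|q(\cdot,t)\|\le c$ and $\|q_x(\cdot,t)\|\le c$ are at hand for all $t\ge 0$.

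For \eqref{nnn} I would integrate \eqref{j**} over $(t,\infty)$. Using the assumed decay of the solution to conclude that $\|q(\cdot,s)\|\to 0$ as $s\to\infty$ kills the boundary term at infinity and gives
\[
\|q(\cdot,t)\|^2=-2\int_t^\infty{\rm Im}\{q_x(0,r)\bar q(0,r)\}\,dr
\le 2\Big(\int_0^\infty|q_x(0,r)|^2dr\Big)^{1/2}\Big(\int_t^\infty|q(0,r)|^2dr\Big)^{1/2},
\]
after which \eqref{unifqx} bounds the first factor by a constant, which is \eqref{nnn}. This step is identical to the defocusing one precisely because \eqref{j**} does not see $\lambda$.

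For \eqref{j29l} I would integrate \eqref{j0} with $\lambda=-1$ over $(t,\infty)$, using $\|q(\cdot,\infty)\|=\|q_x(\cdot,\infty)\|=0$, to obtain
\[
\|q_x(\cdot,t)\|^2=\|q(\cdot,t)\|_4^4+2\int_t^\infty{\rm Re}\{q_x(0,r)\bar q_t(0,r)\}\,dr .
\]
Here the quartic term carries the bad (positive) sign and cannot be discarded as in the defocusing case. I would control it with the Sobolev/Gagliardo--Nirenberg inequality $\|q\|_4^4\le\|q\|^3\|q_x\|\le \eps\|q_x\|^2+c_\eps\|q\|^6$; absorbing $\eps\|q_x(\cdot,t)\|^2$ into the left-hand side, estimating $\|q(\cdot,t)\|^6=(\|q(\cdot,t)\|^2)^3$ by \eqref{nnn} (which produces the exponent $3/2$), and bounding the boundary integral by Cauchy--Schwarz together with \eqref{unifqx}, yields precisely \eqref{j29l}.

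The main obstacle is the step where the boundary terms at $t=\infty$ are dropped. In the defocusing setting the vanishing of $\|q(\cdot,\infty)\|$ and $\|q_x(\cdot,\infty)\|$ followed from the $L^4$-decay Theorem~\ref{thm2}, whose proof relied decisively on the favourable sign of the nonlinearity; that route is closed here, which is exactly why pointwise decay of $q$ is imposed as a hypothesis. Upgrading this pointwise decay, together with the uniform $H^1$ bound, to genuine vanishing of the spatial $L^2$ and $\dot H^1$ norms at infinity (so that no additive constant survives in \eqref{nnn}--\eqref{j29l}) is the delicate point; the smallness of $\int_0^\infty|q(0,t)|^2\,dt$ is what simultaneously activates \eqref{unifqx} and keeps the wrong-signed quartic term subordinate during the absorption.
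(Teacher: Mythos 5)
Your proposal is correct and follows essentially the same route as the paper: the smallness hypothesis is invoked exactly to activate the $\lambda<0$ branch of Theorem \ref{thm1} (hence \eqref{unifqx}), inequality \eqref{nnn} comes from integrating \eqref{j**} over $(t,\infty)$ as in the proof of \eqref{j28}, and \eqref{j29l} comes from integrating \eqref{j0} over $(t,\infty)$ and absorbing the wrong-signed quartic term via $\|q\|_4^4\leq\|q\|^3\|q_x\|\leq\tilde{c}\|q_x\|^2+c\|q\|^6$ together with \eqref{nnn}. Your closing caveat---that the pointwise decay hypothesis is silently upgraded to vanishing of $\|q(\cdot,t)\|$ and $\|q_x(\cdot,t)\|$ as $t\to\infty$---is a fair observation, but the paper's own proof passes over this point in exactly the same way.
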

\begin{proof}
The first inequality is obvious since its proof is independent of
the sign of the nonlinearity in the NLS equation, cf. the proof of \eqref{j28}.

Relation, \eqref{j0} when integrated in $(t,\infty)$ gives
\begin{equation*}
\begin{split}
\|q_x(\cdot,t)\|^2=2{\rm Re}\int_t^\infty
q_x(0,r)\bar{q_t}(0,r)dr+\|q(\cdot,t)\|_4^4,
\end{split}
\end{equation*}
Thus, using
$$\|q\|_4^4\leq \|q\|^{3}\|q_x\|\leq
\tilde{c}\|q_x\|^2+c\|q\|^6,$$ for $\tilde{c}>0$ as small as we need
and \eqref{nnn}, we obtain \eqref{j29l}. The smallness of
$\int_0^\infty|q(0,t)|^2dt,$ is needed so that Theorem \ref{thm1}
holds true and thus $\int_0^\infty |q_x(0,r)|^2dr$ is bounded.
\end{proof}
Furthermore, we proceed by proving the next estimate.
\begin{prop}\label{prop2l}
Under the assumptions of Proposition \ref{prop1l} it holds that
there exists $c>0$ independent of $t$ such that for any $t\geq 0$
\begin{equation}\label{j30}
\begin{split}
\int_t^\infty |q_x(0,r)|^2dr\leq &
c\Big{(}\int_t^\infty|q(0,r)|^2dr\Big{)}^{1/4}\Big{(}\int_t^\infty|q_t(0,r)|^2dr\Big{)}^{1/4}\\
&+c\Big{(}\int_t^\infty|q(0,r)|^2dr\Big{)}
\\&+c\Big{(}\int_t^\infty|q(0,r)|^2dr\Big{)}^{1/2}\Big{(}\int_t^\infty|q_t(0,r)|^2dr\Big{)}^{1/2}.
\end{split}
\end{equation}
\end{prop}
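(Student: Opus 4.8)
The plan is to mirror the proof of Proposition \ref{prop2}, starting from the pointwise identity \eqref{ja1}, which was derived directly from the NLS equation \emph{without} any assumption on the sign of $\lambda$. Setting $\lambda=-1$ there, I would integrate over $(t,\infty)$ and invoke the standing hypothesis of this section that $q(x,\infty)=0$ pointwise (together with $\|q(\cdot,\infty)\|=0$, which follows from \eqref{nnn} as $t\to\infty$), to obtain
\begin{equation*}
\int_t^\infty|q_x(0,r)|^2dr=-{\rm i}(q(\cdot,t),q_x(\cdot,t))+{\rm i}\int_t^\infty q(0,r)\bar{q_t}(0,r)dr-\int_t^\infty|q(0,r)|^4dr.
\end{equation*}
The key observation is that in the focusing case the quartic term now carries a \emph{minus} sign, hence it is nonpositive and may simply be discarded to yield an upper bound. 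This is the one place where the sign of the nonlinearity actually works in our favour.

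Next I would estimate the two surviving contributions separately. For the boundary term a single application of Cauchy--Schwarz in time produces $\big(\int_t^\infty|q(0,r)|^2dr\big)^{1/2}\big(\int_t^\infty|q_t(0,r)|^2dr\big)^{1/2}$, which is exactly the third term on the right-hand side of \eqref{j30}. For the inner-product term I would use $|(q(\cdot,t),q_x(\cdot,t))|\le\|q(\cdot,t)\|\,\|q_x(\cdot,t)\|$ and then insert the two a priori bounds supplied by Proposition \ref{prop1l}, namely \eqref{nnn} and \eqref{j29l}. Taking square roots gives $\|q(\cdot,t)\|\le c\big(\int_t^\infty|q(0,r)|^2dr\big)^{1/4}$ and, via $\sqrt{a+b}\le\sqrt a+\sqrt b$, $\|q_x(\cdot,t)\|\le c\big(\int_t^\infty|q_t(0,r)|^2dr\big)^{1/4}+c\big(\int_t^\infty|q(0,r)|^2dr\big)^{3/4}$.

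Multiplying these two bounds is where the first two terms of \eqref{j30} are produced: the product of the $1/4$-power of $\int_t^\infty|q(0,r)|^2dr$ with the $1/4$-power of $\int_t^\infty|q_t(0,r)|^2dr$ gives the first term, while the product of the $1/4$-power with the $3/4$-power collapses (since $1/4+3/4=1$) to the linear term $c\int_t^\infty|q(0,r)|^2dr$, which is precisely the new middle term distinguishing \eqref{j30} from its defocusing counterpart. Collecting the three pieces yields \eqref{j30}. I do not anticipate a serious obstacle, as the argument is structurally identical to Proposition \ref{prop2}; the only genuine subtlety is that, unlike the defocusing setting, the vanishing $q(x,\infty)=0$ used to integrate \eqref{ja1} cannot be established here and must be taken as the standing assumption of this section, and one must keep in force the smallness of $\int_0^\infty|q(0,t)|^2dt$ needed for Proposition \ref{prop1l} (hence for \eqref{nnn} and \eqref{j29l}) to hold.
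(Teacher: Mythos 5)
Your proposal is correct and follows essentially the same route as the paper: the paper's own proof is the one-line instruction to integrate \eqref{ja1} over $(t,\infty)$ using $q(x,\infty)=0$, and your write-up simply fills in the details that this leaves implicit (discarding the now-nonpositive quartic term, bounding $|(q(\cdot,t),q_x(\cdot,t))|$ via \eqref{nnn} and \eqref{j29l}, and applying Cauchy--Schwarz to the boundary term). Your accounting of how the $1/4$- and $3/4$-powers combine to produce the linear middle term in \eqref{j30} is exactly the intended mechanism.
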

\begin{proof}
We integrate \eqref{ja1} in time in $(t,\infty)$ and use that
$q(x,\infty)=0$ for any $x\in (0,\infty)$ to obtain the result.
\end{proof}

Now, we can establish the following theorem.

\begin{theorem}\label{thm3l}
Under the assumptions of Proposition \ref{prop1l}, if as
$t\rightarrow \infty$
$$q(0,t)=\mathcal{O}(t^{-\alpha}),\;\;\;\;q_t(0,t)=\mathcal{O}(t^{-\beta}),
\;\;\;\;q_{tt}(0,t)=\mathcal{O}(t^{-\gamma})$$ where
$$\alpha>5/2,\;\;\;\;\beta>1/2,\;\;\;\;\gamma>1/2,$$
then
\begin{equation}\label{j31}
\int_0^\infty|q_{xt}(0,t)|^2dt<\infty.
\end{equation}
\end{theorem}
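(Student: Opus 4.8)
The plan is to mirror, step for step, the defocusing argument that establishes $\int_0^\infty|q_{xt}(0,t)|^2dt<\infty$, making only two structural substitutions: reverse the sign of the nonlinear terms (because $\lambda=-1$), and replace the defocusing a priori bounds \eqref{j28}, \eqref{j29} by their focusing counterparts \eqref{nnn}, \eqref{j29l} supplied by Proposition \ref{prop1l}. First I would set $v:=q_t$ and differentiate \eqref{nls} in $t$; for $\lambda=-1$ this gives $${\rm i}v_t+v_{xx}+4|q|^2v+2q^2\bar{v}=0,$$ which differs from \eqref{34*n} only by the signs of the two nonlinear terms. Since every estimate downstream is taken in absolute value, these signs are immaterial and the algebraic skeleton of the proof is preserved. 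Multiplying this equation by $\bar{v_x}$, integrating in $x$ and then over $(0,t)$, I obtain the exact analogue of \eqref{j322} for $\int_0^t|v_x(0,r)|^2dr$, in which the cubic terms are controlled through the pointwise bound $|q(x,r)|^2\leq\|q(\cdot,r)\|\,\|q_x(\cdot,r)\|$.

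The one genuine novelty enters here. Inserting \eqref{nnn} and \eqref{j29l} (rather than \eqref{j28}, \eqref{j29}) yields $$|q(x,r)|^2\leq c\Big(\int_r^\infty|q(0,s)|^2ds\Big)^{1/4}\Big(\int_r^\infty|q_s(0,s)|^2ds\Big)^{1/4}+c\int_r^\infty|q(0,s)|^2ds=:G(r),$$ so the weight $G$ acquires an extra summand compared with the defocusing case, coming from the additional $\big(\int_r^\infty|q(0,s)|^2ds\big)^{3/2}$ term in \eqref{j29l}. The entire Gronwall machinery then goes through verbatim provided this new $G$ is integrable over $(0,\infty)$. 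The first summand behaves like $r^{(-\alpha-\beta+1)/2}$, which is summable iff $\alpha+\beta>3$; since $\alpha>5/2$ and $\beta>1/2$ we indeed have $\alpha+\beta>3$, and this is precisely why $\beta$ may be relaxed here from the value $5/2$ (needed in the defocusing case only to prove decay of the solution via Theorem \ref{thm2}) to $1/2$, decay being now assumed outright. The second summand behaves like $r^{-2\alpha+1}$ and is summable since $\alpha>5/2>1$.

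With $\int_0^\infty G(r)dr\leq c$ established, I would reproduce the estimate \eqref{j36} for $\|v(\cdot,t)\|^2$, the bound \eqref{j36*} for $\max_t\|v(\cdot,t)\|^2$, and the estimate \eqref{j46} for $\|v_x(\cdot,t)\|^2$; the auxiliary fact $\int_0^t\|q(\cdot,r)\|dr\leq c$ used there again requires only $\alpha>5/2$. Feeding these back into the analogue of \eqref{j34} produces an inequality in which $\int_0^\infty|v_x(0,r)|^2dr$ appears on both sides, multiplied on the right by the small constants $\hat{c}_0$, $\hat{c}_1$, $c_0$ together with the smallness of $\int_0^\infty|q(0,t)|^2dt$. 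Absorbing this self-referential term into the left-hand side leaves $$\int_0^\infty|q_{xt}(0,r)|^2dr\leq c\int_0^\infty|q_t(0,r)|^2dr+c\int_0^\infty|q_{tt}(0,r)|^2dr+c\leq c,$$ finite because $\beta>1/2$ and $\gamma>1/2$.

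The main obstacle is this final absorption. In the focusing sign the nonlinearity supplies no coercivity, so I rely entirely on the hypothesis that $\int_0^\infty|q(0,t)|^2dt$ be sufficiently small: it is what makes Theorem \ref{thm1} applicable (delivering $\int_0^\infty|q_x(0,r)|^2dr<\infty$ with the constant $c_1=0$), and it is what keeps the coefficient of the recursive $\int_0^\infty|v_x(0,r)|^2dr$ term strictly below $1$. One must check that a single smallness budget for $\int_0^\infty|q(0,t)|^2dt$ can be distributed among all these roles simultaneously; this is the delicate bookkeeping step, but it is quantitative and causes no difficulty once the exponent conditions $\alpha>5/2$, $\alpha+\beta>3$, $\beta>1/2$, $\gamma>1/2$ are in force.
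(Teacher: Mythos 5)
Your proposal is correct and follows essentially the same route as the paper: the paper also differentiates the equation in $t$, multiplies by $\bar{v_x}$, replaces the defocusing bounds \eqref{j28}, \eqref{j29} by \eqref{nnn}, \eqref{j29l} to get exactly your modified weight (called $\tilde{G}$ there, with the extra summand $c\int_r^\infty|q(0,s)|^2ds$), verifies $\int_0^\infty\tilde{G}(r)\,dr<\infty$ from $\alpha>5/2$, $\beta>1/2$, and then declares the remainder identical to the defocusing argument. Your write-up is in fact more explicit than the paper's, since you spell out the Gronwall and absorption steps and the role of the smallness of $\int_0^\infty|q(0,t)|^2dt$, which the paper leaves implicit by reference to the earlier proof.
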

\begin{proof}
Let $$v:=q_t,$$ then the NLS equation \eqref{nls} gives by taking
the derivative in $t$
\begin{equation}\label{34*}
{\rm i}v_t+v_{xx}+4|q|^2v+2q^2\bar{v}=0.
\end{equation}
 We
multiply the above with $\bar{v_x}$ and integrate in space, to
obtain
\begin{equation*}
\begin{split}
&\frac{{\rm i}}{2}\frac{d}{dt}(v(\cdot,t),v_x(\cdot,t))+\frac{\rm
i}{2}v(0,t)\bar{v_t}(0,t)-\frac{1}{2}|v_x(0,t)|^2\\
&+4{\rm Re}(|q(\cdot,t)|^2v(\cdot,t),v_x(\cdot,t))+2
{\rm Re}(q^2(\cdot,t)\bar{v}(\cdot,t),v_x(\cdot,t))=0.
\end{split}
\end{equation*}
We integrate the above in time in $(0,t)$ and get
\begin{equation}\label{j32}
\begin{split}
\int_0^t|v_x(0,r)|^2dr=&{\rm i}(v(\cdot,r),v_x(\cdot,r))+{\rm
i}\int_0^tv(0,r)\bar{v_r}(0,r)dr\\
&+8\int_0^t{\rm
Re}(|q(\cdot,r)|^2v(\cdot,r),v_x(\cdot,r))dr+ 4\int_0^t{\rm
Re}(q^2(\cdot,r)\bar{v}(\cdot,r),v_x(\cdot,r))dr.
\end{split}
\end{equation}
But it holds that for any $x$
\begin{equation*}
\begin{split}
|q(x,r)|^2\leq &\|q(\cdot,r)\|\|q_x(\cdot,r)\|\\
\leq &
c\Big{(}\int_r^\infty|q(0,s)|^2ds\Big{)}^{1/4}\Big{(}\int_r^\infty|q_s(0,s)|^2ds\Big{)}^{1/4}\\
&+c\Big{(}\int_r^\infty|q(0,s)|^2ds\Big{)}=:\tilde{G}(r).
\end{split}
\end{equation*}
The rest of the proof is identical to that of Theorem \ref{thm3l}
with $\tilde{G}$ in place of $G$. Note that since
$\alpha>5/2,\;\;\beta>1/2$, then we have
$$\int_0^\infty \tilde{G}(r)dr<\infty.$$
\end{proof}
Now, we have a theorem for the polynomial
decay of the Neumann data  in the case of focusing NLS.
\begin{theorem}\label{thm4l}
Let $q$ be the
unique global classical solution $q\in C^1(L^2)\cap C^0(H^2)$ of
the problem \eqref{nls}-\eqref{ic}, with  $Q\in
C^2$ and $Q(0)=0$, with
$\lambda=-1$, under the assumption \eqref{ic0}. Also, let
$$\int_0^\infty|q(0,t)|^2dt,$$
be sufficiently small. If as $t\rightarrow\infty$
$$q(0,t)=\mathcal{O}(t^{-\alpha}),\;\;\;\;q_t(0,t)=\mathcal{O}(t^{-\beta}),
\;\;\;\;q_{tt}(0,t)=\mathcal{O}(t^{-\gamma}),$$ for
$$\alpha>5/2,\;\;\;\;\beta>1/2,\;\;\;\;\gamma>1/2,$$
then there exists $c>0$ independent of $t$ such that for
any $t$
 large
\begin{equation}\label{j47}
|q_{x}(0,t)|\leq c t^{-\delta},
\end{equation}
for
$\delta=\min\Big{\{}\frac{\alpha+\beta-1}{2},\;\;2\alpha-1\Big{\}}.$
\end{theorem}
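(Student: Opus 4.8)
The plan is to establish the stated decay of the Neumann data $q_x(0,\cdot)$ as the focusing mirror of the defocusing Theorem \ref{thm4}, recycling its three-step architecture with the two focusing-specific ingredients already in place: the global bound $\int_0^\infty|q_{xt}(0,t)|^2\,dt<\infty$ furnished by Theorem \ref{thm3l}, and the tail estimate for the Neumann data supplied by Proposition \ref{prop2l}. The underlying idea is that pointwise decay of $q_x(0,\cdot)$ can be read off from decay of its $L^2$-tail, provided one has uniform control on the $L^2$-tail of its time derivative; this is an Agmon/Sobolev embedding in the single variable $t$.

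First I would record that embedding. Since the hypotheses of Proposition \ref{prop1l} force $q_x(0,t)\to 0$ as $t\to\infty$, I may write for $s$ large
\begin{equation*}
|q_x(0,s)|^2=-\int_s^\infty\frac{d}{dr}|q_x(0,r)|^2\,dr\leq 2\Big(\int_s^\infty|q_x(0,r)|^2\,dr\Big)^{1/2}\Big(\int_s^\infty|q_{xr}(0,r)|^2\,dr\Big)^{1/2}.
\end{equation*}
By Theorem \ref{thm3l} the last factor is bounded by $\big(\int_0^\infty|q_{xt}(0,t)|^2\,dt\big)^{1/2}\leq c$ uniformly in $s$ --- this is exactly where the hypotheses $\alpha>5/2$, $\beta>1/2$, $\gamma>1/2$ are consumed --- so the whole question collapses to controlling the Neumann $L^2$-tail $\int_s^\infty|q_x(0,r)|^2\,dr$.

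For that I would invoke Proposition \ref{prop2l}, which dominates $\int_s^\infty|q_x(0,r)|^2\,dr$ by the three Dirichlet-data tails $\big(\int_s^\infty|q(0,r)|^2\big)^{1/4}\big(\int_s^\infty|q_t(0,r)|^2\big)^{1/4}$, $\int_s^\infty|q(0,r)|^2$, and $\big(\int_s^\infty|q(0,r)|^2\big)^{1/2}\big(\int_s^\infty|q_t(0,r)|^2\big)^{1/2}$. Substituting the polynomial rates $\int_s^\infty|q(0,r)|^2\,dr=\mathcal{O}(s^{1-2\alpha})$ and $\int_s^\infty|q_t(0,r)|^2\,dr=\mathcal{O}(s^{1-2\beta})$, these contributions carry the exponents $\tfrac{\alpha+\beta-1}{2}$, $2\alpha-1$ and $\alpha+\beta-1$; the third is dominated by the first, and reading off the slowest-decaying term gives $|q_x(0,s)|\leq c\,s^{-\delta}$ with $\delta=\min\{\tfrac{\alpha+\beta-1}{2},\,2\alpha-1\}$, as asserted. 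The single structural difference from the defocusing computation of Theorem \ref{thm4} is that Proposition \ref{prop2l} carries a plain quadratic tail $\int_s^\infty|q(0,r)|^2$ where the defocusing Proposition \ref{prop2} carried the quartic tail $\int_s^\infty|q(0,r)|^4$; this single change is what turns the defocusing exponent $\tfrac{4\alpha-1}{4}$ into $2\alpha-1$ in the minimum defining $\delta$, and it traces back to the extra term $c(\int_t^\infty|q(0,r)|^2)^{3/2}$ present in \eqref{j29l} but absent from its defocusing counterpart.

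The genuinely hard part is not this final assembly but the two inputs beneath it, both more delicate here than for $\lambda=1$. Because the $L^4$-decay law of Theorem \ref{thm2} is unavailable for the focusing sign, I cannot derive $q(x,t)\to 0$ and must instead postulate it, together with smallness of $\int_0^\infty|q(0,t)|^2\,dt$; that smallness is precisely what lets Theorem \ref{thm1} deliver a finite $\int_0^\infty|q_x(0,r)|^2\,dr$ in the presence of the focusing nonlinearity, and it propagates through Proposition \ref{prop1l} into Theorem \ref{thm3l}. The most technical point remains the Gronwall argument inside Theorem \ref{thm3l}: run over the whole half-line $\mathbb{R}^+$, Gronwall could a priori produce an exponentially growing factor, so one must verify that the decay estimates \eqref{nnn}, \eqref{j29l} are strong enough to force the integrability of the relevant coefficient (this is where $\alpha>5/2$ is used) and hence a uniformly bounded accumulated factor. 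Checking that this bounded-coefficient mechanism survives the extra focusing term $2q^2\bar v$ in \eqref{34*}, and that a single smallness constant can be fixed to close all the nested estimates simultaneously, is where I expect the real effort to concentrate.
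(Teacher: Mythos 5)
Your proposal coincides with the paper's own proof: the paper disposes of Theorem~\ref{thm4l} in a single sentence (``the proof is analogous to this of Theorem~\ref{thm4}''), and the proof of Theorem~\ref{thm4} is exactly your chain --- the one-dimensional Agmon inequality in $t$, absorption of the factor $\big(\int_s^\infty|q_{xt}(0,r)|^2dr\big)^{1/4}$ via Theorem~\ref{thm3l}, the tail bound of Proposition~\ref{prop2l}, and substitution of the polynomial rates. Your tracing of the structural difference (the quadratic tail $\int_s^\infty|q(0,r)|^2dr$ in Proposition~\ref{prop2l}, originating in the extra term of \eqref{j29l}, replaces the quartic tail of Proposition~\ref{prop2} and turns $\frac{4\alpha-1}{4}$ into $2\alpha-1$) is likewise exactly what distinguishes the focusing statement from the defocusing one.

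One point must be flagged, although you inherit it from the paper rather than introduce it. Your own displayed inequality gives, once the $q_{xt}$-factor is bounded, $|q_x(0,s)|\le c\big(\int_s^\infty|q_x(0,r)|^2dr\big)^{1/4}$; consequently the tail exponents $\frac{\alpha+\beta-1}{2}$, $2\alpha-1$, $\alpha+\beta-1$ must still be divided by $4$ before they can be read off as pointwise decay of $q_x(0,s)$. Carried out consistently, the argument proves $|q_x(0,s)|\le c\,s^{-\delta'}$ with $\delta'=\min\big\{\frac{\alpha+\beta-1}{8},\;\frac{2\alpha-1}{4}\big\}$, not the stated $\delta=\min\big\{\frac{\alpha+\beta-1}{2},\;2\alpha-1\big\}$. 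The corresponding display in the paper's proof of Theorem~\ref{thm4} makes the same slip, and inconsistently so: there the quartic term does receive its outer fourth root (whence $\frac{4\alpha-1}{4}$), while the first two terms do not. So your write-up is a faithful reproduction of the paper's argument, including this defect; note the defect is not cosmetic, since with the corrected exponent the rates assumed in the concluding corollary of Section~\ref{foc} no longer yield $\int_0^\infty|q_x(0,t)|dt<\infty$ without strengthening the hypotheses. A minor attribution remark: the vanishing $q_x(0,t)\to 0$ needed to start the Agmon identity does not come from Proposition~\ref{prop1l}; it follows from $q_x(0,\cdot)\in L^2(0,\infty)$ (Theorem~\ref{thm1}, using the smallness assumption) together with $q_{xt}(0,\cdot)\in L^2(0,\infty)$ (Theorem~\ref{thm3l}), i.e.\ from $q_x(0,\cdot)\in H^1(0,\infty)$.
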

\begin{proof}
The proof is analogous to this of Theorem \ref{thm4}.
\end{proof}

Again, the following corollary is obvious.
\begin{theorem}\label{thm5}
Let $q$ be the
unique global classical solution $q\in C^1(L^2)\cap C^0(H^2)$ of
the problem \eqref{nls}-\eqref{ic}, with  $Q\in
C^2$ and $Q(0)=0$, with
$\lambda=-1$, under the assumption
\eqref{ic0}. Also, let
$$\int_0^\infty|q(0,t)|^2dt,$$
be sufficiently small.  If as $t\rightarrow\infty$
$$q(0,t)=\mathcal{O}(t^{-5/2-\eps}),\;\;\;\;q_t(0,t)=\mathcal{O}(t^{-5/2-\eps}),
\;\;\;\;q_{tt}(0,t)=\mathcal{O}(t^{-1/2-\eps}),$$ for some small
$\eps>0$, then there exists $c>0$ independent of $t$ such that for
any $t$
 large
\begin{equation}\label{j48}
|q_{x}(0,t)|\leq c t^{-2-\eps},
\end{equation}
and thus,
\begin{equation}\label{j49}
\int_0^\infty|q_{x}(0,t)|dt<\infty.
\end{equation}
\end{theorem}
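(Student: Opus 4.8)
The plan is to obtain this statement as an immediate specialization of Theorem \ref{thm4l}. First I would verify that the hypotheses of that theorem are met by the present data. Setting $\alpha=\beta=5/2+\eps$ and $\gamma=1/2+\eps$, the assumed decay rates $q(0,t)=\mathcal{O}(t^{-\alpha})$, $q_t(0,t)=\mathcal{O}(t^{-\beta})$ and $q_{tt}(0,t)=\mathcal{O}(t^{-\gamma})$ are exactly of the form required there, and the inequalities $\alpha>5/2$, $\beta>1/2$, $\gamma>1/2$ all hold since $\eps>0$. The smallness of $\int_0^\infty|q(0,t)|^2\,dt$ is already assumed in the focusing setting, so the conclusion of Theorem \ref{thm4l} applies without change.

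Next I would evaluate the resulting decay exponent. Theorem \ref{thm4l} gives $|q_x(0,t)|\le c\,t^{-\delta}$ for large $t$, with $\delta=\min\{(\alpha+\beta-1)/2,\,2\alpha-1\}$. Substituting the chosen values yields $(\alpha+\beta-1)/2=((5/2+\eps)+(5/2+\eps)-1)/2=2+\eps$ and $2\alpha-1=2(5/2+\eps)-1=4+2\eps$, whence $\delta=\min\{2+\eps,\,4+2\eps\}=2+\eps$. This is precisely \eqref{j48}.

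Finally, \eqref{j49} follows from \eqref{j48}. Fixing $T_0$ large enough that \eqref{j48} holds for $t\ge T_0$, the tail integral $\int_{T_0}^\infty t^{-2-\eps}\,dt$ converges because $2+\eps>1$, while on $[0,T_0]$ the trace $q_x(0,\cdot)$ is continuous, and hence integrable, since $q$ is a classical solution with $q(\cdot,t)\in H^2\hookrightarrow C^1$. Adding the two contributions gives $\int_0^\infty|q_x(0,t)|\,dt<\infty$. There is no genuine obstacle in this corollary: the only substantive points are that the first branch attains the minimum defining $\delta$, which the computation above confirms, and that the exponent $2+\eps$ strictly exceeds $1$, which is exactly what places the Neumann data in $L^1(0,\infty)$ as demanded by the scattering construction.
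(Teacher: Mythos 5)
Your proposal is correct and follows exactly the route the paper intends: Theorem \ref{thm4l} applied with $\alpha=\beta=5/2+\eps$, $\gamma=1/2+\eps$ gives $\delta=\min\{2+\eps,\,4+2\eps\}=2+\eps$, and the $L^1$ bound follows by splitting the integral and using continuity of the trace $q_x(0,\cdot)$ on compact intervals together with $2+\eps>1$. The paper states this theorem as an immediate corollary without writing out the details, so your argument simply makes explicit what the paper leaves implicit.
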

As a result, the decay assumption of the unified method of scattering and inverse scattering of Fokas et al.
is now justified for the case of focusing NLS, at least under the stated assumptions.

We also note that if the Dirichlet data $Q$ belong in the Schwartz class, then the
Neumann values  $q_x(0,t)$ also belong in the Schwartz class.
\section{Conclusions}
The main result  of this paper is the following: in order to
obtain fast enough decay  for the Neumann data $q_x(0,t)$
so that the Fokas method is applicable, it is sufficient to impose
a certain (mild) polynomial decay on the Dirichlet data $q(0,t)$  and its derivatives $q_t(0,t)$ and
$q_{tt}(0,t)$.

Of course the  decay assumed here for the Dirichlet data, although
sufficient, may not be the optimal one.

For the focusing NLS the answer is only provided under the assumption
that the solution tends to $0$ as $t\rightarrow\infty$ and a smallness
assumption for $\int_0^\infty|q(0,t)|^2dt$. The
general problem remains open for this case, as for the moment
we do not posses a proof  of this decay.

\appendix
\section{On the $L^1$ estimate}

We present for completeness an alternative and more general
approach for deriving decay estimates and the $L^1$ estimate for the
Neumann data.

Using that for all positive $x$,
\begin{equation}
\displaystyle{\lim_{t\rightarrow\infty}}q(x,t)=0,\;\;\;\;
\end{equation}
(proved for the defocusing NLS, cf. \eqref{apo2}, assuming as we did in section 4 that it
is true for the focusing NLS as well) we can prove a
general $L^1(0,\infty)$ bound for $q_x(0,t)$, for both cases, as
follows.

Consider the relation \eqref{ja1} (true for both the defocusing and
focusing NLS)
\begin{equation*}\label{j1}
|q_x(0,t)|^2={\rm i}\frac{d}{dt}(q,q_x)+{\rm
i}q(0,t)\bar{q_t}(0,t)+\lambda|q(0,t)|^4.
\end{equation*}
Take $p>1$ and multiply the above with $t^p$ to obtain
\begin{equation}\label{ap1}
t^p|q_x(0,t)|^2={\rm i}t^p\frac{d}{dt}(q,q_x)+{\rm
i}t^pq(0,t)\bar{q_t}(0,t)+\lambda t^p|q(0,t)|^4.
\end{equation}
Integration of \eqref{ap1} in time gives
\begin{equation*}
\begin{split}
\int_0^\infty t^p|q_x(0,t)|^2dt=&{\rm i}\int_0^\infty
t^p\frac{d}{dt}(q(\cdot,t),q_x(\cdot,t))dt+{\rm i}\int_0^\infty
t^pq(0,t)\bar{q_t}(0,t)dt+\lambda \int_0^\infty t^p|q(0,t)|^4dt\\
= &{\rm i} [t^p(q(\cdot,t),q_x(\cdot,t))]_0^\infty-{\rm i}\int_0^\infty
pt^{p-1}(q(\cdot,t),q_x(\cdot,t))dt\\
&+{\rm i}\int_0^\infty t^pq(0,t)\bar{q_t}(0,t)dt+\lambda
\int_0^\infty t^p|q(0,t)|^4dt,
\end{split}
\end{equation*}
and thus
\begin{equation}\label{ap2}
\begin{split}
\int_0^\infty t^p|q_x(0,t)|^2dt\leq&
\displaystyle{\lim_{t\rightarrow\infty}}\Big{(}t^p\|q(\cdot,t)\|\|q_x(\cdot,t)\|\Big{)}+c\int_0^\infty
t^{p-1}\|q(\cdot,t)\|\|q_x(\cdot,t)\|dt\\
&+c\int_0^\infty t^p|q(0,t)||q_t(0,t)|dt+\lambda \int_0^\infty
t^p|q(0,t)|^4dt.
\end{split}
\end{equation}
Since under the assumptions of
Theorem \ref{thm2} it holds
(cf. the proof of \eqref{j28}) that there exists $c>0$ independent
of $t$ such that for any $t\geq 0$
$$
\|q(\cdot,t)\|^2\leq
c\Big{(}\int_t^\infty|q(0,r)|^2dr\Big{)}^{1/2},$$ while $\|q_x\|$
is bounded uniformly in $t$, we obtain by \eqref{ap2}

\begin{equation}\label{ap22}
\begin{split}
\int_0^\infty t^p|q_x(0,t)|^2dt\leq&
c\displaystyle{\lim_{t\rightarrow\infty}}\Big{(}t^p\Big{(}\int_t^\infty|q(0,r)|^2dr\Big{)}^{1/4}\Big{)}
\\&+c\int_0^\infty
t^{p-1}\Big{(}\int_t^\infty|q(0,r)|^2dr\Big{)}^{1/4}dt\\
&+c\int_0^\infty t^p|q(0,t)||q_t(0,t)|dt+\lambda \int_0^\infty
t^p|q(0,t)|^4dt.
\end{split}
\end{equation}

Furthermore,
$$t^p\|q(\cdot,t)\|\|q_x(\cdot,t)\|\leq
ct^p\Big{(}\int_t^\infty|q(0,r)|^2dr\Big{)}^{1/4}\rightarrow 0$$
if $q(0,t)$ has a sufficiently fast  (polynomial) decay as
$t\rightarrow\infty$. Also, we have
$$\int_0^\infty t^{p-1}\|q(\cdot,t)\|\|q_x(\cdot,t)\|dt\leq
c\int_0^\infty
t^{p-1}\Big{(}\int_t^\infty|q(0,r)|^2dr\Big{)}^{1/4}dt\leq c,$$ if
again $q(0,t)$ has a sufficiently fast decay as
$t\rightarrow\infty$. The same argument of sufficiently fast
decay for $q_t(0,t)$ as $t\rightarrow\infty$, together with the
previous one,  finally gives, using \eqref{ap22}
\begin{equation}\label{ap3}
\begin{split}
\int_0^\infty t^p|q_x(0,t)|^2dt\leq c,
\end{split}
\end{equation}
as $p>1$.

Note that a weaker assumption on initial data $q(0,t)$ and
$q_t(0,t)$ resulting in  a bounded right-hand side of relation
\eqref{ap22} also gives \eqref{ap3}. This observation makes this
approach more general.

Now, we are ready to  derive the  $L^1(0,\infty)$ estimate for
$|q_x(0,t)|$. Indeed, 
\begin{equation}\label{ap4}
\begin{split}
\int_0^\infty |q_x(0,t)|dt&=\int_0^1|q_x(0,t)|dt+\int_1^\infty
t^{-\frac{1}{2}-\frac{\eps}{2}}t^{\frac{1}{2}+\frac{\eps}{2}}|q_x(0,t)|dt\\
&\leq
c\Big{(}\int_0^11^2dt\Big{)}^{1/2}\Big{(}\int_0^1|q_x(0,t)|^2dt\Big{)}^{1/2}+
\Big{(}\int_1^\infty
t^{-1-\eps}dt\Big{)}^{1/2}\Big{(}\int_1^\infty
t^{1+\eps}|q_x(0,t)|^2dt\Big{)}^{1/2}\\
&\leq c+ c\Big{(}\int_0^\infty
t^{1+\eps}|q_x(0,t)|^2dt\Big{)}^{1/2}\leq c,
\end{split}
\end{equation}
where we used \eqref{ap3} for $p:=1+\eps$.

\begin{remark}
We could, in fact, proceed to a  detailed presentation of the sufficient
order of decay of $q(0,t)$ and $q_t(0,t)$ which will guarantee a required order of decay
of $q_x(0,t)$, as we did in  Sections 3 and 4. We leave the tedious calculations to the interested reader.  
\end{remark}

\begin{remark}
It is easy to also derive the  $L^1(0,\infty)$ condition  for
$t|q_x(0,t)|$, if one wishes.
As in \eqref{ap4}, we have
\begin{equation}\label{ap4*}
\begin{split}
\int_0^\infty t|q_x(0,t)|dt&=\int_0^1t|q_x(0,t)|dt+\int_1^\infty
tt^{-\frac{1}{2}-\frac{\eps}{2}}t^{\frac{1}{2}+\frac{\eps}{2}}|q_x(0,t)|dt\\
&\leq
c\Big{(}\int_0^1t^2dt\Big{)}^{1/2}\Big{(}\int_0^1|q_x(0,t)|^2dt\Big{)}^{1/2}+
\Big{(}\int_1^\infty
t^{-1-\eps}dt\Big{)}^{1/2}\Big{(}\int_1^\infty
t^2t^{1+\eps}|q_x(0,t)|^2dt\Big{)}^{1/2}\\
&\leq c+ c\Big{(}\int_0^\infty
t^{3+\eps}|q_x(0,t)|^2dt\Big{)}^{1/2}\leq c,
\end{split}
\end{equation}
where we used \eqref{ap3} for $p:=3+\eps$.
\end{remark}

\begin{remark}
The approach of the Appendix is more general although it  only succeeds in giving the
$L^1$ condition for $q_x(0,t)$ and for $tq_x(0,t)$ (which of course is enough for our purposes). 
In Sections
\ref{def} (after Theorem \ref{thm2} and the resulting
\eqref{apo2}), and \ref{foc}, we essentially imposed the extra conditions that
the limits of certain derivatives exist  as
$t\rightarrow \infty$ (and thus, they are $0$, due for
example to the fact that $q(x,t)\rightarrow 0$ as
$t\rightarrow\infty$ uniformly for any $x$, or the $L^2(0,t)$
boundedness as $t\rightarrow\infty$). Thus, we were able to prove
stronger decay estimates for $q_x(0,t)$, which in turn  led also to the $L^1$
condition.
\end{remark}

\end{document}